\definecolor{fgreen}{RGB}{44,144, 14}
\renewenvironment{proof}{{\bfseries Proof.}}{\qed}
\numberwithin{equation}{section} 
\newtheorem*{thm}{Theorem}
\newtheorem{theorem}{Theorem}[section] 
\newtheorem{proposition}[theorem]{Proposition} 
\newtheorem{corollary}[theorem]{Corollary} 
\newtheorem{lemma}[theorem]{Lemma} 
\theoremstyle{definition}
\newtheorem{definition}[theorem]{Definition} 
\newtheorem{remark}[theorem]{Remark} 
\newtheorem{example}[theorem]{Example}
\def\+{\oplus}
\def\<>{\langle \cdot\, , \cdot \rangle}
\newcommand{\secref}[1]{Section~\ref{#1}}
\newcommand{\thmref}[1]{Theorem~\ref{#1}}
\newcommand{\lemref}[1]{Lemma~\ref{#1}}
\newcommand{\remref}[1]{Remark~\ref{#1}}
\newcommand{\propref}[1]{Proposition~\ref{#1}}
\newcommand{\corref}[1]{Corollary~\ref{#1}}
\begin{document} 
	
\title[On Commuting automorphisms of nilpotent Lie algebras]{On Commuting automorphisms of nilpotent Lie algebras}
 \author{Shushma Rani$^{\ast}$}
\address{Harish Chandra Research Institute, Chhatnag Road, Jhunsi, Prayagraj, 211019, Uttar Pradesh, India}
\email{shushmarani95@gmail.com, shushmarani@hri.res.in}

\author{Niranjan Nehra}
	\address{Indian Institute of Science Education and Research Mohali, Knowledge City,  Sector 81, S.A.S. Nagar 140306, Punjab, India}
	\email{niranjannehra11@gmail.com, ph17042@iisermohali.ac.in}
 
 \author{Rohit Garg}
 \address{Department of Mathematics, Government Ripudaman College, Nabha, Punjab 147201, India; School of Mathematical Sciences, National Institute of Science Education and Research, Bhubaneswar, HBNI, P.O. Jatni, Khurda, Odisha 752050, India.}
\email{rohitgarg289@gmail.com, rohitgarg289@niser.ac.in}
\thanks{$^{*}$-The corresponding author.}
\keywords{Nilpotent Lie algebras, coclass, Commuting automorphisms}
\subjclass{17B01, 17B05, 17B30, 17B40}

\begin{abstract}
We investigate the commuting automorphisms of nilpotent Lie algebras $L$ with coclass $\leq 3$. Our examination exposes the conditions under which the set of commuting automorphisms of $L$ forms a subgroup within its automorphism group.
\end{abstract}

\maketitle
 

\section{Introduction} 
The exploration of commuting automorphisms within algebraic structures has garnered significant attention. Our study focuses on finite-dimensional nilpotent Lie algebras $L$ over field $\mathbb{F}$ of characteristic different from $2$. An automorphism $ \alpha $ in $ Aut(L) $ is termed commuting if $ [\alpha(x), x] = 0 $ for all $ x \in L $, and central if $ \alpha(x) - x \in Z(L) $, where $ Z(L) $ denotes the center of $ L $. The sets of commuting automorphisms and central automorphisms of $ L $ are denoted as $ \mathcal{A}(L) $ and $ Aut_c(L) $, respectively. While $Aut_c(L) \subseteq \mathcal{A}(L)$ forms a subgroup of $Aut(L)$, $\mathcal{A}(L)$ itself may not be a subgroup.

The investigation of commuting automorphisms finds its roots in ring theory \cite{ring1, ring2}, before extending into group theory with a seminal question posed by Herstein. Herstein conjectured whether the identity remains the only commuting automorphism within a simple non-abelian group $ G $. Building upon Herstein's inquiry, subsequent researchers, including Laffey and Pettet \cite{1986}, further expanded the investigation. Pettet's work established conditions under which $ \mathcal{A}(G) $ reduces to the identity automorphism. 

Inspired by research on commuting automorphisms of groups, the exploration of commuting automorphisms for Lie algebras commenced in \cite{commutLie}, where the author characterized Lie algebras for which $ \mathcal{A}(L) = 1_L $, along with results similar to those obtained for groups in \cite{2001,2002}. Additionally, in \cite{2002}, the author raised natural questions regarding the nature of the set of commuting automorphisms $ \mathcal{A}(G) $, particularly whether it forms a subgroup of the group $ Aut(G) $, and under what conditions $ \mathcal{A}(G) $ equals the group of central automorphisms $ Aut_c(G) $. Subsequent research explored this question for finite $p$-groups of coclass $1,2$, and $3$ \cite{2001, 2002, 2013,2015,2016,2013a,2024, rohit2021, rohit2022, rohit2024}, delineating cases where $\mathcal{A}(G)= Aut_c(G)$ and where $\mathcal{A}(G)$ indeed forms a group. For some other classes of finite $p$-groups $G$ for which the set of commuting automorphisms of $G$ forms a subgroup of $Aut(G)$, the readers are advised to refer \cite{rohit2019, rohit2020}.

Our study extends this inquiry to nilpotent Lie algebras of coclass $ 1, 2 $, and $ 3 $ which are similar to the results in p-groups \cite{2013a,2015,2016,2024}. The main contributions of this paper are as follows:

\begin{thm}
    Let $L$ be a  Heisenberg Lie algebra of dimension $n$ ($ n \geq 5$), over filed $\mathbb F$ and $[L: Z(L)] \geq 4$, then $\mathcal{A}(L)$ does not form a subgroup of $Aut(L)$.
 \end{thm}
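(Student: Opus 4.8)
The plan is to exhibit two commuting automorphisms whose composition fails to be commuting; this shows at once that $\mathcal{A}(L)$ is not closed under multiplication, hence not a subgroup of $Aut(L)$. Recall that for a Heisenberg Lie algebra one has $Z(L)=[L,L]=\mathbb{F}z$ for a central generator $z$, and the bracket descends to a nondegenerate alternating form $\omega$ on the quotient $V:=L/Z(L)$, via $[v_1,v_2]=\omega(\bar v_1,\bar v_2)\,z$. Since $\dim L=n\geq 5$ and $[L:Z(L)]=\dim V\geq 4$, I would fix a symplectic basis $x_1,\dots,x_m,y_1,\dots,y_m$ of $V$ (so $[x_i,y_j]=\delta_{ij}z$ and $m\geq 2$).

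The first step is to reduce the commuting condition to the quotient. Any $\alpha\in Aut(L)$ scales the (characteristic, one-dimensional) center, $\alpha(z)=\lambda z$, and induces on $V$ a symplectic similitude $\bar\alpha$, meaning $\omega(\bar\alpha u_1,\bar\alpha u_2)=\lambda\,\omega(u_1,u_2)$. For $v\in L$ with image $\bar v\in V$, the central correction terms of $\alpha$ contribute nothing to a bracket, so one computes $[\alpha(v),v]=\omega(\bar\alpha\bar v,\bar v)\,z$. Consequently $\alpha\in\mathcal{A}(L)$ if and only if $\omega(\bar\alpha u,u)=0$ for all $u\in V$, and the functional (central-valued) part of $\alpha$ is irrelevant. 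The entire problem is thereby transferred to the set $S:=\{T\in\mathrm{GSp}(V,\omega):\omega(Tu,u)=0\ \forall u\}$, and it suffices to produce $T_1,T_2\in S$ with $T_1T_2\notin S$.

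Using $m\geq 2$, the construction is explicit on the first two hyperbolic planes: let $T_1$ send $x_1\mapsto-x_1$, $y_1\mapsto-y_1$ and fix all other basis vectors, and let $T_2$ swap $x_1\leftrightarrow x_2$ and $y_1\leftrightarrow y_2$ and fix the rest. Both are symplectic involutions with multiplier $\lambda=1$, and a direct check gives $\omega(T_iu,u)=0$ for all $u$ (the two planes being $\omega$-orthogonal to the remaining ones), so the corresponding automorphisms $\alpha_1,\alpha_2$ with trivial functional part lie in $\mathcal{A}(L)$. For the composition $\beta=\alpha_1\alpha_2$ one has $\beta(x_1)=x_2$ and $\beta(y_2)=-y_1$; evaluating at $v=x_1+y_2$ gives $[\beta(v),v]=[x_2-y_1,\,x_1+y_2]=2z\neq 0$, since $\mathrm{char}\,\mathbb{F}\neq 2$. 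Hence $\beta\notin\mathcal{A}(L)$, so $\mathcal{A}(L)$ is not a subgroup.

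The conceptual point behind the choice of $T_1,T_2$ — and the step I expect to be the crux — is that on $S$ one has $T^{-1}=\lambda(T)^{-1}T$, equivalently $T^2=\lambda(T)\,I$, from which a short manipulation shows that a product $T_1T_2$ of two elements of $S$ again lies in $S$ \emph{precisely} when $T_1$ and $T_2$ commute as operators. The real content is therefore to find two noncommuting elements of $S$, and this is exactly where the hypothesis $\dim V\geq 4$ enters: when $\dim V=2$ (the excluded case $[L:Z(L)]=2$), the condition $\omega(Tu,u)=0$ forces $Tu$ parallel to $u$ for every $u$, so $T$ is scalar and $S$ is abelian, and $\mathcal{A}(L)$ would in fact be a subgroup. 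The only remaining care is to confirm that $T_1,T_2$ genuinely lift to Lie algebra automorphisms — immediate, since each preserves all bracket relations — and that the terminal bracket is nonzero, which is where $\mathrm{char}\,\mathbb{F}\neq 2$ is used.
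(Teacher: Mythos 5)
Your proof is correct, but it takes a genuinely more structural route than the paper's. The paper simply writes down two explicit linear maps $\beta_1,\beta_2$ on a standard basis $\{u_1,\dots,u_{2k},z_1,\dots,z_{n-2k}\}$ with $[u_{2i-1},u_{2i}]=z_1$, verifies by direct computation that both lie in $\mathcal{A}(L)$, and exhibits $[u_1,\beta_1\beta_2(u_1)]=z_1\neq 0$. You instead transfer the whole problem to the symplectic space $V=L/Z(L)$: commuting automorphisms correspond exactly to similitudes $T$ with $\omega(Tu,u)=0$ (equivalently, $\omega$-self-adjoint similitudes), and your observation that such $T$ satisfy $T^2=\lambda(T)I$, so that $T_1T_2$ is again of this type precisely when $T_1$ and $T_2$ commute, cleanly explains \emph{why} the hypothesis $[L:Z(L)]\geq 4$ is needed (for $\dim V=2$ the relevant set consists of scalars and is abelian); the paper's bare computation does not illuminate this. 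Two small caveats. First, your witness gives $[\beta(v),v]=2z$ and therefore uses $\operatorname{char}\mathbb F\neq 2$; this is covered by the paper's standing assumption, but the paper's own pair produces a bracket with coefficient $1$ and so works in any characteristic, making its counterexample marginally more robust than yours. Second, under the paper's convention a Heisenberg algebra (taken up to isoclinism, with $[L:Z(L)]\geq 4$ rather than $\dim L=2k+1$) may have $\dim Z(L)>1$, so your parenthetical claim that the center is one-dimensional and equals $[L,L]$ is not literally accurate; however, your argument only uses that $L'=\mathbb F z$ is one-dimensional and characteristic (so $\alpha(z)=\lambda z$) and that $\omega$ is nondegenerate on $L/Z(L)$, both of which still hold, so this is cosmetic and easily patched.
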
 

\begin{thm} Let $L$ be $n$-dimensional, nilpotent Lie algebra with coclass $1$ over field $\mathbb F$ of characteristic different from $2$. Then $\mathcal{A}(L)= Aut_c(L)$.   
\end{thm}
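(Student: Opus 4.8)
The plan is to prove the nontrivial inclusion $\mathcal A(L)\subseteq Aut_c(L)$, since $Aut_c(L)\subseteq\mathcal A(L)$ always holds. Fix $\alpha\in\mathcal A(L)$ and set $\beta=\alpha-\mathrm{id}$, a linear endomorphism of $L$; because $Z(L)$ is a subspace, it suffices to show $\beta(x)\in Z(L)$ for every $x$ in a basis. The commuting condition $[\alpha(x),x]=0$ reads $[\beta(x),x]=0$ for all $x$, and polarizing (here I use $\mathrm{char}\,\mathbb F\neq2$) gives $[\beta(x),y]+[\beta(y),x]=0$, equivalently the adjoint relation $[\beta(x),y]=[x,\beta(y)]$ for all $x,y\in L$. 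I will exploit the filiform structure: since $L$ has coclass $1$ it is filiform of class $n-1$, its centre is the one-dimensional space $Z(L)=\gamma_{n-1}(L)$, and it admits an adapted (Vergne) basis $e_1,\dots,e_n$ with $[e_1,e_i]=e_{i+1}$ for $2\le i\le n-1$, $[e_1,e_n]=0$, and $\gamma_j(L)=\langle e_{j+1},\dots,e_n\rangle$ for $j\ge2$. In particular $e_1$ is regular: $\ad(e_1)$ has rank $n-2$ with kernel $C_L(e_1)=\langle e_1,e_n\rangle$, where $e_n$ spans $Z(L)$.

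First I would pin down $\beta$ up to the kernel of $\ad(e_1)$. The commuting condition at $e_1$ gives $\beta(e_1)\in C_L(e_1)=\langle e_1,e_n\rangle$, say $\beta(e_1)=a e_1+b e_n$. Feeding this into the adjoint relation with $x=e_1$ yields $[e_1,\beta(y)]=a[e_1,y]$ for all $y$, so that $\beta(y)-a\,y\in\ker\ad(e_1)=\langle e_1,e_n\rangle$; hence there are linear functionals $\lambda,\mu$ with
\[
\beta(y)=a\,y+\lambda(y)e_1+\mu(y)e_n\qquad(y\in L).
\]

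Next I would kill the functional $\lambda$ using the second generator $e_2$, which I expect to be the crux of the argument. Applying the adjoint relation with $x=e_2$ and substituting the displayed form of $\beta$ on both sides, the terms $a[e_2,y]$ cancel, and $[e_2,e_1]=-e_3$, $[e_2,e_n]=0$ leave the identity $\lambda(e_2)[e_1,y]=-\lambda(y)e_3$ for all $y$. Taking $y=e_2$ forces $2\lambda(e_2)e_3=0$, so $\lambda(e_2)=0$ (again using $\mathrm{char}\,\mathbb F\neq2$ and $e_3\neq0$); feeding this back gives $\lambda(y)e_3=0$, whence $\lambda\equiv0$ and $\beta(y)=a\,y+\mu(y)e_n$ for all $y$.

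Finally I would show $a=0$ by invoking that $\alpha$ is an automorphism. Writing $\alpha(e_i)=(1+a)e_i+\mu(e_i)e_n$ and comparing $\alpha([e_1,e_2])$ with $[\alpha(e_1),\alpha(e_2)]$ gives $(1+a)e_3+\mu(e_3)e_n=(1+a)^2e_3$. Here is where the dimension enters: once $\dim L\ge4$ the vectors $e_3$ and $e_n$ are independent (indeed $e_3\notin Z(L)=\gamma_{n-1}(L)$), so comparing coefficients yields $(1+a)=(1+a)^2$, i.e.\ $a\in\{0,-1\}$; the value $a=-1$ would force $\alpha(y)=\mu(y)e_n$ to have rank $\le1$, contradicting bijectivity, so $a=0$. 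Then $\beta(y)=\mu(y)e_n\in Z(L)$ for every $y$, i.e.\ $\alpha\in Aut_c(L)$, completing the inclusion $\mathcal A(L)\subseteq Aut_c(L)$. The genuinely small case $\dim L=3$ (the Heisenberg algebra, where $e_3=e_n$) falls outside this coefficient comparison and must be examined on its own. Thus the main obstacle is the interplay in the middle step between the adjoint relation and the adapted basis that forces $\lambda\equiv0$, together with the dimension hypothesis needed to conclude $a=0$.
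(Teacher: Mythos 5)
Your argument is correct where it runs (all $\dim L\ge 4$; the cases $\dim L\le 2$ are trivially abelian), and it takes a genuinely different route from the paper's. The paper's proof leans on \corref{shush1} --- every commuting automorphism satisfies $\alpha(x)-x\in Z_2(L)$, which rests on the Jacobi-identity computations of \lemref{shush1a} --- and then argues generator by generator: writing $L=\left\langle u,v\right\rangle$ with $u$ regular, it places $\alpha(u)-u$ in $Z_2(L)\cap C_L(u)$, kills the $u$-component using $[\alpha(u)-u,v_1]=0$ (an element of $Z_2(L)$ annihilates $L'$, so $kv_2=0$, and $v_2\neq 0$ when $\dim L\ge 4$), repeats the argument for $v$, and then implicitly uses that $\{x:\alpha(x)-x\in Z(L)\}$ is a subalgebra to pass from the two generators to all of $L$. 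You bypass $Z_2(L)$ entirely: you use only the polarization identity $[\beta(x),y]=[x,\beta(y)]$ (the paper's \lemref{firstlem}), the kernel of $\ad(e_1)$, the relation at $e_2$ to force $\lambda\equiv 0$, and --- the step with no counterpart in the paper --- multiplicativity of $\alpha$ on $[e_1,e_2]$ to force $a=0$. What each buys: the paper gets the scalar part essentially for free from the heavier structural lemma, while your route is more elementary and self-contained, and it yields a closed formula $\beta(y)=ay+\lambda(y)e_1+\mu(y)e_n$ valid on all of $L$ at once, so no generation argument is needed at the end. (One small correction: polarization itself needs no hypothesis on the characteristic; where you truly use $\mathrm{char}\,\mathbb F\neq 2$ is in concluding $\lambda(e_2)=0$ from $2\lambda(e_2)e_3=0$.)

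About the case $\dim L=3$ that you flag as unresolved: your caution is warranted, and in fact no argument can close that case, because the statement fails there. For the Heisenberg algebra $\left\langle x,y,z:[x,y]=z\right\rangle$ (which has coclass $1$) and any scalar $a\neq 0,1$ in $\mathbb F$ --- such $a$ exists since $\mathrm{char}\,\mathbb F\neq 2$ --- the map $x\mapsto ax$, $y\mapsto ay$, $z\mapsto a^2z$ is an automorphism lying in $\mathcal A(L)\setminus Aut_c(L)$. The paper's own proof silently breaks at exactly the same spot: its claim that ``$L^2=0$, which is not true if $\dim L\geq 3$'' is wrong precisely when $\dim L=3$, where $L^2=0$ does hold, so the coefficient $k$ there is not forced to vanish. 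So your proof reproduces the paper's actual range of validity, and flagging $n=3$ explicitly is a merit of your write-up rather than a gap relative to the paper.
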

\begin{thm} Let $L$ be $n$-dimensional, nilpotent Lie algebra with coclass $2$ over field $\mathbb F$ of characteristic different from $2$. Then $\mathcal{A}(L)$ forms a subgroup of $Aut(L)$.   
\end{thm}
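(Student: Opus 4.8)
The plan is to check that $\mathcal{A}(L)$ is a subgroup directly from the definition. Membership of the identity is clear, and closure under inverses is automatic: if $[\alpha(x),x]=0$ for every $x$, then putting $x=\alpha^{-1}(y)$ gives $[y,\alpha^{-1}(y)]=0$ and hence $[\alpha^{-1}(y),y]=0$. So the entire content of the theorem is closure under composition, which I would first convert into a single bracket identity.

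For $\alpha\in\mathcal{A}(L)$ set $f_\alpha=\alpha-\operatorname{id}$, a linear endomorphism of $L$ satisfying $[f_\alpha(x),x]=0$ for all $x$. Polarizing this identity produces the symmetric relation $[f_\alpha(u),v]=[u,f_\alpha(v)]$ for all $u,v$, and the hypothesis $\operatorname{char}\mathbb F\neq2$ is exactly what makes this symmetric form equivalent to the original commuting condition. For $\alpha,\beta\in\mathcal{A}(L)$ and $\gamma=\alpha\beta$ one computes $f_\gamma(x)=f_\alpha(x)+f_\beta(x)+f_\alpha(f_\beta(x))$; since the first two summands bracket trivially with $x$, and since the symmetric relation converts $[f_\alpha(f_\beta(x)),x]$ into $[f_\beta(x),f_\alpha(x)]$, we obtain
\[
[\gamma(x),x]=[f_\beta(x),f_\alpha(x)]\qquad\text{for all }x\in L.
\]
Consequently $\mathcal{A}(L)$ is closed under composition if and only if $[f_\alpha(x),f_\beta(x)]=0$ for every $x\in L$ and every pair $\alpha,\beta\in\mathcal{A}(L)$. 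This criterion holds trivially when at least one of $\alpha,\beta$ is central, for then one of the two arguments lies in $Z(L)$ and the bracket vanishes; in particular it recovers the containment $Aut_c(L)\subseteq\mathcal{A}(L)$ at the level of closure.

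The remaining, genuinely substantive case is that of two non-central commuting automorphisms, and here I would feed in the classification of coclass $2$ nilpotent Lie algebras. Since the coclass is $2$, the lower central series satisfies $\sum_i\big(\dim(L_i/L_{i+1})-1\big)=2$, which confines $L$ to a short list of structural families. For each family the plan is to show that the images $f_\alpha(x)$ and $f_\beta(x)$ of non-central commuting automorphisms always lie in a common abelian subalgebra of $L$ — for instance by locating $f_\alpha(L)$ inside a distinguished abelian ideal, or by showing that $f_\alpha(x)$ is proportional to $f_\beta(x)$ modulo $Z(L)$ — so that $[f_\alpha(x),f_\beta(x)]$ reduces to a bracket of central elements and vanishes. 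The main obstacle is precisely this last point: proving that the quadratic map $x\mapsto[f_\alpha(x),f_\beta(x)]$ is identically zero rather than merely vanishing on a proper subspace. That this rigidity is special to coclass $2$ is exactly what the Heisenberg result above (the first theorem) illustrates — for a Heisenberg algebra of large rank the induced maps on $L/Z(L)$ are self-adjoint for the symplectic form, and $x\mapsto[f_\alpha(x),f_\beta(x)]$ is then a genuinely nonzero quadratic form — so the coclass-$2$ argument must exploit the smallness of the coclass in an essential way.
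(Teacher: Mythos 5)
Your reduction is sound and in fact coincides with the paper's own starting point: the identity $[\alpha\beta(x),x]=[f_\beta(x),f_\alpha(x)]$ is exactly the computation the paper performs (in the form $[\beta\alpha(x),x]=[\alpha(x),\beta(x)]$, via \lemref{firstlem}), so closure under composition is indeed equivalent to the vanishing of $[f_\alpha(x),f_\beta(x)]$ for all $x$. The gap is everything after that. You leave the substantive step as a plan --- invoke ``the classification of coclass $2$ nilpotent Lie algebras'' and check family by family that $f_\alpha(x)$ and $f_\beta(x)$ lie in a common abelian subalgebra --- but you never produce the classification, the families, or the verification, and you yourself flag that the quadratic map $x\mapsto[f_\alpha(x),f_\beta(x)]$ might only vanish on a proper subspace. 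As written, nothing is proved beyond the (easy) reduction; moreover, a classification-based route is far heavier than necessary and is not obviously tractable over an arbitrary field of characteristic $\neq 2$.

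The missing idea is the one that turns the problem into dimension counting. First, for \emph{any} Lie algebra and any $\alpha\in\mathcal{A}(L)$ one has $\alpha(x)-x\in Z_2(L)$ for all $x$: this is \corref{shush1}, obtained from Jacobi-identity manipulations of the commuting condition (\lemref{shush1a}, where characteristic $\neq 2$ enters). So $f_\alpha(L)\subseteq Z_2(L)$ always; this is precisely your ``distinguished abelian ideal'' candidate, and it needs no structure theory. Second, coclass $2$ forces $L^{n-3}\subseteq Z(L)$ and $L^{n-4}\subseteq Z_2(L)$, hence $2\leq\dim Z_2(L)\leq 3$. A short case check on $(\dim Z_2(L),\dim Z(L))$ shows $Z_2(L)$ is abelian except possibly when $\dim Z_2(L)=3$ and $\dim Z(L)=1$; in that last case $Z(L)=L^{n-3}$ and \lemref{shush2} applies (either $Z_2(L)$ is again abelian, or one reduces to $[L:Z(L)]=2$, where \lemref{wholeL} finishes by direct computation). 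In every case both $f_\alpha(x)$ and $f_\beta(x)$ lie in an abelian $Z_2(L)$ (or the special lemma applies), so $[f_\alpha(x),f_\beta(x)]=0$ and your criterion is verified --- with no classification at all. This also explains your Heisenberg remark correctly: there $Z_2(L)=L$ is far from abelian, which is exactly why the argument, and the theorem, fail in that setting.
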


\begin{thm}
    For an $n$-dimensional nilpotent Lie algebra $L$ over field $\mathbb F$ with characteristic different from $2$ and coclass $3$, the following conditions imply that $\mathcal{A}(L)$ forms a subgroup of $Aut(L)$: \begin{enumerate}
\item The dimension of $L$ is at most $4$.
\item The dimension of $L$ is 5, and the center of $L$ has a dimension different from $1$.
\item The dimension of $L$ is at least $6$, and one of the following conditions is satisfied:
\begin{itemize}
\item The nilpotency class of the second center $Z_2(L)$ is not equal to $2$.
\item The nilpotency class of $Z_2(L)$ is $2$, and at least one of the following is true:
\begin{itemize}
\item The dimension of the center of $L$ is not equal to $1$.
\item The dimension of the second center $Z_2(L)$ is not equal to $4$.
\item The dimension of the derived subalgebra $L'$ is not equal to $n-4$.
\end{itemize}
\end{itemize}
\end{enumerate} In other words, if any of these conditions hold, then the set of commuting automorphisms of $L$ form a subgroup of the full automorphism group of $L$.
\end{thm}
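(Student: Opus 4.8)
The overall strategy is to establish, case by case, the equality $\mathcal{A}(L)=Aut_c(L)$ wherever possible, and in the residual configurations to check closure of $\mathcal{A}(L)$ under composition and inversion by hand; because $Aut_c(L)$ is always a subgroup of $Aut(L)$, the equality $\mathcal{A}(L)=Aut_c(L)$ already gives the theorem. The main device is a linearization of the commuting condition. For $\alpha\in\mathcal{A}(L)$ set $\beta=\alpha-\mathrm{id}$; from $[\alpha(x),x]=0$ we get $[\beta(x),x]=0$, and substituting $x+y$ for $x$ (this is where $\mathrm{char}\,\mathbb{F}\neq 2$ enters) yields the symmetry relation $[\beta(x),y]=[x,\beta(y)]$ for all $x,y\in L$. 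Since $\alpha$ preserves every term of the upper central series, $\beta$ carries each $Z_i(L)$ into itself, and proving $\alpha\in Aut_c(L)$ amounts to showing $\beta(L)\subseteq Z(L)$, equivalently $[\beta(x),y]=0$ for all $x,y$.

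The low-dimensional cases (1) and (2) I would settle directly: for coclass $3$ there are only finitely many isomorphism types in dimension $\leq 5$, and for each one I would record $Aut(L)$, cut out $\mathcal{A}(L)$ using the symmetry relation, and verify $\beta(L)\subseteq Z(L)$ by inspection. The configuration deliberately excluded here, $n=5$ with $\dim Z(L)=1$, is exactly the $5$-dimensional Heisenberg algebra (class $2$, $\dim Z(L)=1$, $[L:Z(L)]=4$), and the Heisenberg result stated above shows that for it $\mathcal{A}(L)$ is not a subgroup; this both accounts for the exclusion and shows the list of hypotheses is sharp.

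For $n\geq 6$ I would run the argument through the second center $Z_2(L)$, where the commuting condition is most restrictive: every $x\in Z_2(L)$ satisfies $[x,L]\subseteq Z(L)$, so the symmetry relation keeps both $[\beta(x),y]$ and $[x,\beta(y)]$ inside $Z(L)$. A general observation streamlines the split: since $[Z_2(L),Z_2(L)]\subseteq Z(L)$ and hence $[[Z_2(L),Z_2(L)],Z_2(L)]\subseteq[Z(L),L]=0$, the nilpotency class of $Z_2(L)$ is always at most $2$; thus the hypothesis ``class of $Z_2(L)\neq 2$'' means precisely that $Z_2(L)$ is abelian. In that abelian case there is no Heisenberg pairing inside $Z_2(L)$ to obstruct centrality, and the symmetry relation applied to a generating set forces $\beta(L)\subseteq Z(L)$ for every commuting $\alpha$, so that $\mathcal{A}(L)=Aut_c(L)$ is a subgroup.

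The crux is the complementary case, $Z_2(L)$ of class exactly $2$. Here $[Z_2(L),Z_2(L)]$ is a nonzero subspace of $Z(L)$, and the induced nondegenerate alternating form on $Z_2(L)/Z(Z_2(L))$ with values in $[Z_2(L),Z_2(L)]$ controls the structure. When the three coincidences $\dim Z(L)=1$, $\dim Z_2(L)=4$, $\dim L'=n-4$ hold simultaneously, this form pins $Z_2(L)$ down to a Heisenberg-by-abelian algebra $\cong H_1\oplus\mathbb{F}$, and the resulting Heisenberg pair yields genuinely non-central commuting automorphisms that fail to close, exactly mirroring the Heisenberg result above --- which is why this configuration is excluded. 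The decisive and most delicate step is therefore to show that relaxing any single one of the three equalities prevents this $H_1\oplus\mathbb{F}$ block from arising: the extra dimension in $Z(L)$, in $Z_2(L)$, or in $L'$ either enlarges the center enough to absorb $\beta(L)$ or forces $\beta$ to annihilate the offending pair, in every instance restoring $\beta(L)\subseteq Z(L)$ and hence the subgroup property of $\mathcal{A}(L)$.
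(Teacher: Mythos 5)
Your proposal has a genuine gap at its core: the whole strategy rests on the claim that when $Z_2(L)$ is abelian, the symmetry relation forces $\beta(L)=(\alpha-\mathrm{id})(L)\subseteq Z(L)$, i.e.\ $\mathcal{A}(L)=Aut_c(L)$, and that claim is false. Concretely, let $L$ have basis $\{u,v,v_1,v_2,w,c\}$ with $[u,v]=v_1$, $[u,v_1]=v_2$, $[v,w]=v_2$, and all other brackets of basis vectors zero. This is nilpotent of class $3$ and dimension $6$, hence of coclass $3$; here $Z(L)=\langle v_2,c\rangle$ and $Z_2(L)=\langle v_1,v_2,w,c\rangle$ is abelian, so $L$ falls squarely under case (3) of the theorem. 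The linear map $\alpha$ fixing $v_2,w,c$ and sending $u\mapsto u+w$, $v\mapsto v-v_1$, $v_1\mapsto v_1-2v_2$ is an automorphism, and for $x=au+bv+cv_1+dv_2+ew+fc$ one computes $[\alpha(x)-x,x]=ab[w,v]-ab[v_1,u]=-abv_2+abv_2=0$, so $\alpha\in\mathcal{A}(L)$; yet $\alpha(u)-u=w\notin Z(L)$. Thus $\mathcal{A}(L)\neq Aut_c(L)$ even though $Z_2(L)$ is abelian, and no argument can ``force $\beta(L)\subseteq Z(L)$''. The same defect undermines your plan for cases (1)--(2): in dimension $5$ with $[L:Z(L)]=3$, the scalar maps (multiplication by $a$ on a complement of $Z(L)$ and by $a^2$ on $L'=Z(L)$, with $a\neq 0,1$) are commuting but not central, so the ``verify $\beta(L)\subseteq Z(L)$ by inspection'' step would fail there as well; moreover a classification by isomorphism type over an arbitrary field of characteristic $\neq 2$ is field-dependent and unnecessary. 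The equality $\mathcal{A}(L)=Aut_c(L)$ is special to coclass $1$; in coclass $2$ and $3$ only the subgroup property survives.

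What actually closes the argument --- and is the paper's route --- is weaker and simpler. By the paper's corollary to its basic lemmas, $\alpha(x)-x\in Z_2(L)$ for every $\alpha\in\mathcal{A}(L)$ and every $x$; hence if $Z_2(L)$ is abelian and $\alpha,\gamma\in\mathcal{A}(L)$, writing $\alpha(x)=x+z_1$ and $\gamma(x)=x+z_2$ with $z_1,z_2\in Z_2(L)$ gives $[\gamma\alpha(x),x]=[\alpha(x),\gamma(x)]=[x+z_1,x+z_2]=[z_1,z_2]=0$, which is closure under composition with no centrality claim whatsoever. The remaining content (the paper's lemma and proposition in the coclass-$3$ section) is to show that each hypothesis in case (3) forces $Z_2(L)$ to be abelian. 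Your observation that $Z_2(L)$ always has nilpotency class at most $2$, because $[[Z_2(L),Z_2(L)],Z_2(L)]\subseteq[Z(L),L]=0$, is correct and is in fact a cleaner derivation of part of this than the paper's dimension count. But the subcases ``class of $Z_2(L)$ equal to $2$ with one of the three equalities relaxed'' then turn out to be vacuous: the relaxed equality itself forces $Z_2(L)$ abelian, contradicting class $2$. So the step you flag as ``decisive and most delicate'' --- which you only announce, and with the wrong target $\beta(L)\subseteq Z(L)$ --- is not a delicate structural analysis at all; it is a short contradiction, after which the abelian-$Z_2(L)$ closure computation finishes every case.
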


The paper is structured as follows: \secref{sec-2} provides a review of necessary background, and previously established results on commuting automorphisms from \cite{commutLie}. \secref{sec-3} introduces essential lemmas crucial for proving the main theorems and establishing the theorem where $\mathcal{A}(L)$ does not form a subgroup of $Aut(L)$ for Heisenberg Lie algebras. Other main theorems are then demonstrated in subsequent sections:  \secref{sec_coclass1} establishes the equality $Aut_c(L)=\mathcal{A}(L)$ for nilpotent Lie algebras of coclass $1$. \secref{sec_coclass2} and \secref{sec_coclass3} respectively deal with the cases of coclass $2$ and certain cases of coclass $3$, where $\mathcal{A}(L)$ forms a subgroup of $Aut(L)$.

\subsection*{\em{Acknowledgements}}{\em The authors express gratitude to Dr. Tushar Kanta Naik for introducing the problem. Additionally, the first author acknowledges the support received from the HRI postdoctoral fellowship.}

\section{Preliminaries}\label{sec-2}
Throughout the paper, $L$ denotes a Lie algebra of finite dimension over an arbitrary field $\mathbb F$. 

\begin{definition}
    Let $L$ be a Lie algebra of an arbitrary dimension of an arbitrary field $\mathbb F$ and $Z(L)$, $Z_2(L)$ denotes the center, and the second center of the Lie algebra, respectively, defined as
    $$Z(L)= \{ x \in L: [x,y]=0 \ \forall \ y \in L\}.$$
    $$ Z_2(L)= \{ x \in L: [x, L] \subseteq Z(L)\}.$$
    An automorphism $\alpha: L \longrightarrow L$ on Lie algebra $L$ is a Lie algebra homomorphism that is one-one and onto. The set of all automorphisms of $L$ is denoted by $Aut(L)$, which forms a group under the operation composition of maps.
\end{definition}

\begin{definition}
    An automorphism $\alpha: L \longrightarrow L$ is called central if $\alpha(x)-x \in Z(L)$ for all $x \in L$ and it is called commuting automorphism if $[\alpha(x), x]=0$ for all $x \in L.$ Fix some notations:
    $$Aut_c(L):= \{ \alpha \in Aut(L) : \alpha(x)-x \in Z(L) \ \forall \ x \in L\}$$
    $$ \mathcal{A}(L):= \{ \alpha \in Aut(L) : [\alpha(x), x]=0 \ \forall \ x \in L \}$$
    It is easy to check that $\alpha^{-1} \in \mathcal{A}(L)$ for all $\alpha \in \mathcal{A}(L).$ Observe that $Aut_c(L) \subseteq \mathcal{A}(L).$ $Aut_c(L)$ is a subgroup of $Aut(L)$ but $\mathcal{A}(L)$ need not be a subgroup of $Aut(L)$, see \thmref{heisenberg}. 
\end{definition}
\begin{definition}
    A Lie algebra $L$ is said to be nilpotent Lie algebra if the lower central series of $L$, defined as $$L \supset L' \supset L^2 \supset \cdots L^i \supset \cdots $$ terminates at a finite step, i.e., there exists an $n \in \mathbb Z_+$ such that $L^n=0$ and the least such $n$, i.e., $L^{n-1} \neq 0$ and $L^n=0$, is called the nilpotency class of $L$. If $L$ is finite-dimensional nilpotent Lie algebra, say $\dim L=k$, with nilpotency class, $n$, then $k-n$ is called coclass of $L$. An $n$-dimensional nilpotent Lie algebra of coclass $1$ is said to be a Lie algebra of maximal nilpotency class. In this case, $\dim L'= n-2$ and $[L^i: L^{i+1}]=1$ for all $1 \leq i \leq n-2$ and $L^{n-1}=0.$
\end{definition}
\begin{definition}
    Two Lie algebras $L_1$ and $L_2$ over field $\mathbb F$ are said to be isoclinic, whenever there exist isomorphisms $\eta_1: L_1/Z(L_1) \longrightarrow L_2/Z(L_2)$ and $\eta_2: L_1' \longrightarrow L_2'$ such that if $$ \eta_1(u_i + Z(L_1))= v_i+ Z(L_2), \text{ for } i=1,2, $$ then $\eta_2 [u_1, u_2]= [v_1, v_2]$. The pair $(\eta_1, \eta_2)$ is called isoclinism between $L_1$ and $ L_2$.
\end{definition}
\begin{remark}
    Any finite dimensional nilpotent Lie algebra with $1$ dimensional derived subalgebra is a Heisenberg Lie algebra up to isoclinism. 
\end{remark}

First, we will recall some of the properties of $\mathcal{A}(L)$ from \cite{commutLie}. For the sake of completeness, we are adding proofs here.

\begin{lemma}\label{firstlem}
Let $L$ be a Lie algebra of an arbitrary dimension over field $\mathbb F$ and  $\alpha \in \mathcal{A}(L)$ and $x,y \in L.$ Then 
\begin{itemize}
    \item[(i)] $[\alpha(x), y]=[x, \alpha(y)]$
    \item[(ii)] $[\alpha(x)-x, y]=[x, \alpha(y)-y]$
    \item[(iii)] $\alpha(Z(L)) \subseteq Z(L).$
\end{itemize}    
\end{lemma}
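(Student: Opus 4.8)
The plan is to obtain all three statements from a single \emph{linearization} of the commuting condition $[\alpha(z),z]=0$, which holds for every $z\in L$ because $\alpha\in\mathcal{A}(L)$. Part (i) is the crux; parts (ii) and (iii) will then follow as immediate formal consequences.

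For part (i), I would substitute $z=x+y$ into the defining identity. Using linearity of $\alpha$ and bilinearity of the bracket, $[\alpha(x+y),x+y]$ expands into the four terms $[\alpha(x),x]$, $[\alpha(x),y]$, $[\alpha(y),x]$, and $[\alpha(y),y]$. The first and last vanish since $\alpha$ is commuting, leaving the cross-term relation $[\alpha(x),y]+[\alpha(y),x]=0$. Rewriting $[\alpha(y),x]=-[x,\alpha(y)]$ by antisymmetry of the bracket gives exactly $[\alpha(x),y]=[x,\alpha(y)]$. For part (ii), I would simply subtract $[x,y]$ from both sides of (i): expanding by bilinearity yields $[\alpha(x)-x,y]=[\alpha(x),y]-[x,y]$ and $[x,\alpha(y)-y]=[x,\alpha(y)]-[x,y]$, and (i) identifies the two leading terms, so the two differences coincide.

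For part (iii), I would take an arbitrary $z\in Z(L)$ and test centrality of $\alpha(z)$. For every $y\in L$, part (i) gives $[\alpha(z),y]=[z,\alpha(y)]$, and the right-hand side is $0$ because $z$ is central. Since $y$ was arbitrary, $\alpha(z)\in Z(L)$, i.e. $\alpha(Z(L))\subseteq Z(L)$.

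There is no serious obstacle here: the argument is entirely formal once the linearization in (i) is carried out, and it is worth noting that no division occurs, so the relation $[\alpha(x),y]+[\alpha(y),x]=0$ is obtained directly and the hypothesis on the characteristic of $\mathbb F$ plays no role in this lemma. The only point requiring a little care is to apply the commuting hypothesis three times in (i) — to $x$, to $y$, and to $x+y$ — rather than assuming the cross-term identity outright.
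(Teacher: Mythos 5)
Your proposal is correct and follows exactly the paper's argument: linearize $[\alpha(x+y),x+y]=0$ to get $[\alpha(x),y]+[\alpha(y),x]=0$, from which (i) follows by antisymmetry, and then (ii) and (iii) are formal consequences. The paper's proof is just a terser version of the same computation, so nothing further is needed.
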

\begin{proof}
    Since $\alpha \in \mathcal{A}(L)$ and $x, y \in L$, $[\alpha(x+y), x+y]=0 \implies [\alpha(x), y]+[\alpha(y), x]=0 .$ Thus (i) follows. Parts (ii) and (iii) follow easily from this.
\end{proof}

\begin{lemma}\label{shush1a} Let $L$ be a Lie algebra of an arbitrary dimension over field $\mathbb F$ and  $\alpha \in \mathcal{A}(L)$ then
    \begin{itemize}
        \item[(i)] $[\alpha(x)-x,[y,z]]=[\alpha(y)-y, [x,z]]$
 for all $x,y,z \in L$.
 \item [(ii)] $[y,[y, \alpha(x)-x]=0$ for all $x, y \in L$
 \item[(iii)] $[\alpha(x)-x, [y,z]]= 2[z,[y, \alpha(x)-x]]$  for all $x,y,z \in L$.
 \item[(iv)] $[\alpha(x)-x, [y,z]]=0$ for all $x,y,z \in L$.
 \end{itemize}
\end{lemma}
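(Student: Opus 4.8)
Throughout, write $\delta=\alpha-\mathrm{id}$, so that $\delta(x)=\alpha(x)-x$ and the commuting hypothesis becomes $[\delta(x),x]=0$. Linearizing this exactly as in the proof of \lemref{firstlem}, by substituting $x+y$, gives the antisymmetry $[\delta(x),y]=-[\delta(y),x]$ for all $x,y$; I will also repeatedly use $[\delta(x),y]=[x,\delta(y)]$, which is \lemref{firstlem}(ii). The plan is to prove (i) first, as the technical core, and then to obtain (ii), (iii) and (iv) from it by short, self-contained manipulations.

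For (i), I would expand $[\delta(x),[y,z]]$ by the Jacobi identity as $[[\delta(x),y],z]+[y,[\delta(x),z]]$, apply the antisymmetry to the first summand and \lemref{firstlem}(ii) followed by a second use of Jacobi to the second, and proceed identically with $[\delta(y),[x,z]]$. After using the automorphism identity $\alpha[u,v]=[\alpha(u),\alpha(v)]$ to rewrite $\delta$ of a bracket, the difference of the two sides collapses to the single residual term $[z,[\delta(x),\delta(y)]]$. Thus (i) is \emph{equivalent} to the statement that $[\delta(x),\delta(y)]\in Z(L)$ for all $x,y$, and establishing this — i.e. taming the contribution that is quadratic in $\delta$ — is exactly where \lemref{firstlem} together with the standing hypothesis $\operatorname{char}\mathbb F\neq 2$ must enter in an essential way. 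I expect this to be the main obstacle of the whole lemma; the remaining three parts are formal consequences of (i).

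Granting (i), the rest is quick. For (ii), put $z=y$ in (i): the left side is $[\delta(x),[y,y]]=0$, so $[\delta(y),[x,y]]=0$ for all $x,y$; separately, replacing the inner bracket $[y,\delta(x)]$ by $[\delta(y),x]$ via the antisymmetry and \lemref{firstlem}(ii) and then applying Jacobi with $[\delta(y),y]=0$ rewrites $[y,[y,\delta(x)]]$ as $[\delta(y),[y,x]]=-[\delta(y),[x,y]]$, which vanishes by the first step. For (iii), polarize (ii) by replacing $y$ with $y+z$, obtaining $[y,[z,\delta(x)]]+[z,[y,\delta(x)]]=0$; feeding this antisymmetry into the Jacobi expansion of $[\delta(x),[y,z]]$ turns both summands into $[z,[y,\delta(x)]]$ and produces the factor $2$. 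Finally, for (iv) set $R(x,y,z)=[\delta(x),[y,z]]$: part (i) says $R$ is unchanged under the transposition of $x$ and $y$, while $[y,z]=-[z,y]$ says $R$ changes sign under the transposition of $y$ and $z$. Since these two transpositions are conjugate in $S_3$, composing them forces $R(x,y,z)=-R(x,y,z)$, whence $R\equiv 0$ because $\operatorname{char}\mathbb F\neq 2$, which is (iv).
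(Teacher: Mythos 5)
Your proposal has a genuine gap at its core: part (i) is never actually proved. Your reduction is correct algebra --- writing $\delta=\alpha-\mathrm{id}$, one indeed finds $[\delta(x),[y,z]]-[\delta(y),[x,z]]=[z,[\delta(x),\delta(y)]]$, so (i) is equivalent to $[\delta(x),\delta(y)]\in Z(L)$ for all $x,y$ --- but you then merely assert that taming this quadratic term is ``where \lemref{firstlem} and $\operatorname{char}\mathbb F\neq 2$ must enter'' and call it the main obstacle, leaving it unresolved. This is not a routine omission: the natural way to see $[\delta(x),\delta(y)]\in Z(L)$ is via $\delta(x)\in Z_2(L)$, which is \corref{shush1}, and that corollary is itself deduced from part (iv) of this very lemma, so filling your gap that way would be circular. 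Since your (ii), (iii) and (iv) are all derived from (i), the entire proposal is conditional on the one claim you did not establish.

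The paper's proof avoids the quadratic term altogether by staying linear in $\alpha$ rather than working with $\delta$: for (i) it expands $[\alpha(z),[x,y]]=[z,\alpha([x,y])]=[z,[\alpha(x),\alpha(y)]]$ by Jacobi on both ends, uses \lemref{firstlem}(i) in the form $[\alpha(u),v]=[u,\alpha(v)]$ to rewrite every term with the fixed element $[\alpha(z),y]$ or $[x,\alpha(z)]$ inside, obtains $[\alpha(x)-x,[y,\alpha(z)]]=[\alpha(y)-y,[x,\alpha(z)]]$, and finally invokes surjectivity of $\alpha$ to replace $\alpha(z)$ by an arbitrary element. Note this uses no hypothesis on the characteristic, contrary to your expectation that $\operatorname{char}\mathbb F\neq 2$ is essential in (i); characteristic enters only in (iv), and even there the paper covers $\operatorname{char}\mathbb F=2$ separately via (iii). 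The paper also proves (ii) directly (a computation with $\alpha^{-1}$ and \lemref{firstlem}, independent of (i)), then gets (iii) from (ii) by the same polarization you use, and (iv) from (i), (iii) and \lemref{firstlem}. Your conditional derivations of (ii) from (i), (iii) from (ii), and (iv) from (i) via the $S_3$ sign argument are all correct, but to repair the proposal you must either prove $[\delta(x),\delta(y)]\in Z(L)$ directly or replace your treatment of (i) by an argument of the paper's type.
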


\begin{proof} Let $x, y, z \in L$
\begin{enumerate}
    \item[(i)] Using Jacobi identity and \lemref{firstlem}, we get,
    $$\begin{aligned} [x, [\alpha(z), y]]+[y, [x, \alpha(z)]] &=  [\alpha(z), [x,y]]=[z, \alpha([x,y])]=[z, [\alpha(x), \alpha(y)]]\\ & = [\alpha(x), [z, \alpha(y)]]+[\alpha(y), [\alpha(x), z]]\\  &= [\alpha(x), [\alpha(z), y]]+ [\alpha(y), [x, \alpha(z)]]\\ 
    [\alpha(x)-x,[y,\alpha(z)]] &=[\alpha(y)-y, [x,\alpha(z)]] 
    \end{aligned}$$\\ 
 Hence (i) follows as $\alpha$ is an onto map.
  \item[(ii)] Using Jacobi identity and \lemref{firstlem}, we get,
  $$\begin{aligned}[y,[y, \alpha(x)] &= [y, [\alpha(y), x]]= [y, \alpha([y, \alpha^{-1}(x)])]=[\alpha(y), [y, \alpha^{-1}(x)]]\\ &=[y, [\alpha(y), \alpha^{-1}(x)]]+[\alpha^{-1}(x), [y, \alpha(y)]]= [y, [\alpha(y), \alpha^{-1}(x)]]=[y,[y,x]] \end{aligned}$$
Hence, $[y,[y, \alpha(x)-x] =0 .$

\item[(iii)] Using part(ii), we get, $$\begin{aligned} 0 &= [y+z, [y+z, \alpha(x)-x]=[y,[z, \alpha(x)-x]]+[z, [y, \alpha(x)-x]]\\  
[z, [y, \alpha(x)-x]] &=-[y,[z, \alpha(x)-x]] \\ [\alpha(x)-x, [y,z]] &= -[y,[z, \alpha(x)-x]]-[z,[\alpha(x)-x, y]] \quad  \text{using Jacobi identity}\\ &= 2[z, [y, \alpha(x)-x]]
\end{aligned}$$
\item[(iv)] Using part(i), (iii) and \lemref{firstlem},
$$\begin{aligned} [\alpha(x)-x, [y,z]] &= 2[z,[y, \alpha(x)-x]]= 2[z,[\alpha(y)-y,x]] = [\alpha(y)-y, [z,x]] \\ &= - [\alpha(y)-y, [x,z]] = -[\alpha(x)-x, [y,z]]\\ 
[\alpha(x)-x, [y,z]] & =0 \text{ if } char \, \mathbb F \neq 2.\\
[\alpha(x)-x, [y,z]] & =0 \text{ if } char \, \mathbb F=2 \text{ (by part(iii)}
\end{aligned}$$
 \end{enumerate} 
\end{proof}


\begin{corollary}\label{shush1}If $\alpha \in \mathcal{A}(L)$ and $x \in L$ then $\alpha(x)-x \in Z_2(L).$      
\end{corollary}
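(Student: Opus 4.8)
The plan is to unwind the definition of the second center and then feed it directly into \lemref{shush1a}. By definition, $\alpha(x)-x \in Z_2(L)$ holds precisely when $[\alpha(x)-x,\,y] \in Z(L)$ for every $y \in L$, which in turn means $[\,[\alpha(x)-x,\,y],\,z\,]=0$ for all $y,z \in L$. So the whole corollary reduces to producing this triple-bracket vanishing, and the bracket identities already established in \lemref{shush1a} are tailored for exactly this purpose.

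First I would combine parts (iii) and (iv) of \lemref{shush1a}. Part (iv) gives $[\alpha(x)-x,\,[y,z]]=0$, while part (iii) rewrites the very same expression as $2[z,\,[y,\alpha(x)-x]]$. Hence $2[z,\,[y,\alpha(x)-x]]=0$, and since $\operatorname{char}\mathbb{F}\neq 2$ we may cancel the factor $2$ to obtain $[z,\,[y,\alpha(x)-x]]=0$ for all $y,z \in L$. Then I would read off the conclusion: fixing $y$ and letting $z$ range over all of $L$, the vanishing $[z,\,[y,\alpha(x)-x]]=0$ says exactly that $[y,\alpha(x)-x] \in Z(L)$. Because $[y,\alpha(x)-x]=-[\alpha(x)-x,\,y]$, this is the same as $[\alpha(x)-x,\,y] \in Z(L)$ for every $y$, i.e. $[\alpha(x)-x,\,L]\subseteq Z(L)$, which is precisely the membership $\alpha(x)-x \in Z_2(L)$.

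The only point that genuinely needs care is the role of the characteristic. The cancellation of the factor $2$ is where $\operatorname{char}\mathbb{F}\neq 2$ enters, and it is the main obstacle to a characteristic-free statement: in characteristic $2$ the identity coming from part (iii) collapses to $0=0$ and carries no information, so one would instead have to exploit part (ii) together with the Jacobi identity to control $[\,[\alpha(x)-x,\,y],\,z\,]$ by hand. Apart from isolating this characteristic issue, the argument is a short bookkeeping exercise that simply translates the already-proved bracket identities into the defining condition for $Z_2(L)$.
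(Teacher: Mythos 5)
Your proof is correct and follows essentially the same route as the paper: both reduce the claim to the vanishing $[[\alpha(x)-x,y],z]=0$ and read off $[\alpha(x)-x,y]\in Z(L)$, i.e.\ $\alpha(x)-x\in Z_2(L)$. You merely make explicit what the paper leaves implicit, namely that this triple-bracket vanishing comes from combining parts (iii) and (iv) of \lemref{shush1a} and cancelling the factor $2$, which is where $\operatorname{char}\mathbb{F}\neq 2$ is genuinely used.
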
 
\begin{proof} Let $\alpha \in \mathcal{A}(L)$, then $[[\alpha(x)-x,y],z]=0$ for all $x, y, z \in L$, implies that $[\alpha(x)-x, y] \in Z(L), \forall x, y \in L$, i.e., $\alpha(x)-x \in Z_2(L)$ for all $x \in L$.    
\end{proof}
\section{Main results}\label{sec-3}
 
\begin{lemma}\label{abelian} Let $L$ be a Lie algebra over field $\mathbb F$ such that $char \, \mathbb F \neq 2$ and $Z_2(L)$ is abelian. Then $\mathcal{A}(L)$ is a subgroup of $Aut(L)$.
\end{lemma}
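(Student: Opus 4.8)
The plan is to reduce everything to closure under composition. Since $\mathrm{id}_L \in \mathcal{A}(L)$ trivially and $\mathcal{A}(L)$ is already closed under taking inverses (both noted in the definition), to conclude that $\mathcal{A}(L)$ is a subgroup of $Aut(L)$ it suffices to show that $\alpha\beta \in \mathcal{A}(L)$ whenever $\alpha,\beta \in \mathcal{A}(L)$. Thus, fixing $\alpha,\beta \in \mathcal{A}(L)$, the entire task becomes verifying $[\alpha\beta(x),x]=0$ for every $x \in L$.

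The key structural input is \corref{shush1}: for any $\gamma \in \mathcal{A}(L)$ the displacement $\gamma(x)-x$ lies in $Z_2(L)$. I would therefore set $u := \beta(x)-x$ and $v := \alpha(x)-x$, so that $u,v \in Z_2(L)$. Expanding $\alpha\beta(x)=\alpha(x+u)=\alpha(x)+\alpha(u)=x+v+\alpha(u)$ and bracketing with $x$ gives, after discarding $[x,x]=0$,
\begin{equation*}
[\alpha\beta(x),x]=[v,x]+[\alpha(u),x].
\end{equation*}
The first term vanishes because $[v,x]=[\alpha(x)-x,x]=[\alpha(x),x]=0$ (as $\alpha$ is commuting), so the whole problem reduces to showing $[\alpha(u),x]=0$.

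This is where the hypothesis enters. Using \lemref{firstlem}(i) I would move $\alpha$ off of $u$ and onto $x$, writing $[\alpha(u),x]=[u,\alpha(x)]=[u,x+v]=[u,x]+[u,v]$. Now both $u$ and $v$ lie in $Z_2(L)$, which is abelian by assumption, so $[u,v]=0$; moreover $[u,x]=[\beta(x)-x,x]=[\beta(x),x]=0$ since $\beta$ is commuting. Hence $[\alpha(u),x]=0$ and consequently $[\alpha\beta(x),x]=0$, as required.

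The computation itself is short; the only genuinely substantive moves are (a) invoking \corref{shush1} to force both displacements into $Z_2(L)$, and (b) applying \lemref{firstlem}(i) to relocate $\alpha$ so that the cross term $[u,v]$ sits entirely inside $Z_2(L)$, where abelianness annihilates it — precisely the point at which the assumption that $Z_2(L)$ is abelian is indispensable. Note that $\operatorname{char}\mathbb F \neq 2$ is not used directly here; it enters only through \corref{shush1} and the lemmas feeding it. I would expect the main conceptual hurdle to be recognizing that closure under inverses comes for free, so that the abelianness of $Z_2(L)$ need only be brought to bear against the single cross term $[u,v]$ produced by composition.
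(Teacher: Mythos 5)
Your proof is correct and follows essentially the same route as the paper: both use \corref{shush1} to place the displacements $\alpha(x)-x$ and $\beta(x)-x$ in $Z_2(L)$, apply \lemref{firstlem}(i) once to reduce $[\alpha\beta(x),x]$ to the three terms $[v,x]+[u,x]+[u,v]$, and kill the cross term with the abelianness of $Z_2(L)$. The only cosmetic difference is that the paper first rewrites $[\beta\alpha(x),x]=[\alpha(x),\beta(x)]$ and then expands, whereas you expand first and relocate $\alpha$ afterwards; your explicit remarks about the identity and closure under inverses are a small completeness bonus over the paper's proof.
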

\begin{proof} Let $Z_2(L)$ is abelian and $\alpha, \beta \in \mathcal{A}(L)$. By \corref{shush1}, $\alpha(x)-x, \beta(x)-x \in Z_2(L)$ for all $x\in L.$ Take $\alpha(x)= x+z_1$ and $\beta(x)= x+z_2$ where $z_1, z_2 \in Z_2(L)$.
 $$ \begin{aligned}
     [ z_1, x ] &= [\alpha(x)-x, x]= [\alpha(x), x] =0\\
     [ z_2, x ] &= [\beta(x)-x, x]= [\beta(x), x]=0 \\
     [\alpha(x), \beta(x)] &= [x+z_1, x+z_2]= [z_1, z_2]=0 \quad (\because Z_2(L) \text{ is abelian})\\
     [\beta\alpha(x), x] &= [\alpha(x), \beta(x)],  \text{ by } \lemref{firstlem}\\
     [\beta\alpha(x), x] &=0, \ \forall \ x \in L,
 \end{aligned}$$
Thus $\beta \alpha \in \mathcal{A}(L)$ for all, $\alpha, \beta \in \mathcal{A}(L).$  Hence the lemma.  
\end{proof}


\begin{lemma}\label{wholeL} Let $L$ be a nilpotent Lie algebra over field $\mathbb F$ such that $char \, \mathbb F \neq 2$ and $L' \subseteq Z(L)$, $[L: Z(L)]=2$. Then $\mathcal{A}(L)$ is a subgroup of $Aut(L)$.
\end{lemma}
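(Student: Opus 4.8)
The plan is to reduce everything to the induced action of a commuting automorphism on the two-dimensional quotient $L/Z(L)$, which inherits a nondegenerate alternating form from the bracket of $L$. First I would record the structural facts. Choose $x_1,x_2\in L$ whose images form a basis of $V:=L/Z(L)$. Since $L'\subseteq Z(L)$, every bracket involving a central element vanishes, so $L'=[L,L]$ is spanned by the single element $[x_1,x_2]$; as $[L:Z(L)]=2$ forces $L$ to be non-abelian, $[x_1,x_2]\neq 0$, and hence $\dim L'=1$ with $L'=\mathbb{F}z$ for $z:=[x_1,x_2]\in Z(L)$. Writing $\bar u$ for the image of $u$ in $V$, the rule $[u,v]=\omega(\bar u,\bar v)\,z$ then defines an alternating bilinear form $\omega$ on the two-dimensional space $V$, and its radical consists of those $\bar u$ with $[u,L]=0$, i.e. with $u\in Z(L)$; hence the radical is zero and $\omega$ is nondegenerate.

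The key step is to show that every $\alpha\in\mathcal{A}(L)$ moves each vector only within its own line modulo the centre: for each $x\in L$ one has $\overline{\alpha(x)}\in\mathbb{F}\,\bar x$. Indeed, the commuting condition $[\alpha(x),x]=0$ reads $\omega(\overline{\alpha(x)},\bar x)=0$, and in a two-dimensional symplectic space the kernel of the functional $w\mapsto\omega(w,\bar x)$ is precisely the line $\mathbb{F}\,\bar x$ when $\bar x\neq 0$ (and if $\bar x=0$ then $x\in Z(L)$ and the claim is trivial). Consequently there is a scalar $a_x\in\mathbb{F}$ with $\alpha(x)-a_x x\in Z(L)$, and likewise $\beta(x)-b_x x\in Z(L)$ for a second commuting automorphism $\beta$.

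With this in hand, closure under composition is immediate, in the style of \lemref{abelian}. For $\alpha,\beta\in\mathcal{A}(L)$ and any $x\in L$, \lemref{firstlem}(i) gives $[\beta\alpha(x),x]=[\alpha(x),\beta(x)]$. Writing $\alpha(x)=a_x x+w_1$ and $\beta(x)=b_x x+w_2$ with $w_1,w_2\in Z(L)$, we expand $[\alpha(x),\beta(x)]=a_xb_x[x,x]+a_x[x,w_2]+b_x[w_1,x]+[w_1,w_2]$; here $[x,x]=0$ and every remaining term is a bracket with a central element, hence $[\beta\alpha(x),x]=0$ and $\beta\alpha\in\mathcal{A}(L)$. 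Combined with the already-noted fact that $\alpha^{-1}\in\mathcal{A}(L)$ whenever $\alpha\in\mathcal{A}(L)$, this proves that $\mathcal{A}(L)$ is a subgroup of $Aut(L)$.

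I expect the only genuine obstacle to be the middle step, namely verifying that the commuting condition pins $\overline{\alpha(x)}$ onto the line $\mathbb{F}\,\bar x$. This rests on $\dim(L/Z(L))=2$ together with the nondegeneracy of $\omega$, which jointly make every nonzero vector equal to its own symplectic-orthogonal line; in a higher-dimensional quotient the same deduction would fail, which is precisely why the hypothesis $[L:Z(L)]=2$ is essential. The remaining steps are routine bracket computations exploiting $L'\subseteq Z(L)$.
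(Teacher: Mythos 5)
Your proof is correct, and its skeleton matches the paper's: reduce modulo $Z(L)$, show the commuting condition forces $\alpha(x)$ into $\mathbb{F}x+Z(L)$, then get closure from $[\beta\alpha(x),x]=[\alpha(x),\beta(x)]$ (\lemref{firstlem}(i)), with inverses already known. The difference lies in the middle step. The paper fixes a basis $\{x,y\}$ of $L$ modulo $Z(L)$, kills the off-diagonal coefficients of $\alpha$ using $[\alpha(x),x]=[\alpha(y),y]=0$, and then invokes \lemref{firstlem}(i) a second time to show the two diagonal coefficients agree, so that $\alpha$ acts as one uniform scalar plus a central map. You instead package the quotient as a two-dimensional symplectic space $(L/Z(L),\omega)$ and observe that the commuting condition puts $\overline{\alpha(x)}$ in the $\omega$-orthogonal of $\bar x$, which in dimension two is exactly the line $\mathbb{F}\bar x$; this gives the pointwise containment $\alpha(x)\in\mathbb{F}x+Z(L)$ for every $x\in L$, with a scalar allowed to depend on $x$. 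That weaker, pointwise statement already suffices for closure, so you never need the uniform-scalar step, and your argument treats arbitrary $u\in L$ directly, whereas the paper verifies closure on the basis elements and passes to general $u$ by linearity and $\alpha(Z(L))\subseteq Z(L)$. Your formulation also isolates exactly where the hypothesis $[L:Z(L)]=2$ enters: in a higher-dimensional quotient the $\omega$-orthogonal of $\bar x$ is strictly larger than $\mathbb{F}\bar x$, which is consistent with \thmref{heisenberg}, where $[L:Z(L)]\geq 4$ and $\mathcal{A}(L)$ fails to be a subgroup.
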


\begin{proof}  Since $[L: Z(L)]=2$, we can choose $x,y \in L$ such that $L= \left\langle \ x, y, Z(L)  \ \right\rangle$. Then $ [x,y] \in L' \subseteq Z(L)$. Let $\alpha \in \mathcal{A}(L)$ then
 $\alpha(x)= a_1 x+a_2 y+z_1$ and $\alpha(y)= b_1 x+b_2 y+ z_2$, where $a_i, b_i \in \mathbb F$ for all $i=1,2$, $z_1, z_2 \in Z(L)$.
$$\begin{aligned}
    [\alpha(x), x] &= [a_1 x+ a_2 y+ z_1, x]= a_2 [y,x]\\
    [\alpha(y), y] &= [b_1 x+ b_2 y+ z_2, y]= b_1 [x,y]
\end{aligned}$$
    But $[\alpha(u), u]=0$ for all $u \in L$  and $[x,y] \neq 0$ implies that $a_2=0=b_1$. By \lemref{firstlem}, 
    $$[\alpha(x), y]=[x, \alpha(y)]  \implies a_1[x,y]= b_2[x,y] \implies a_1= b_2= a( \text{say}).$$ Thus, $\alpha(x)= a x+ z_1$ and $\alpha(y)= a y+ z_2$.

    Let $\beta_1, \beta_2 \in \mathcal{A}(L)$ then by above discussion, $\beta_i(x)= c_i x+ z_i$ and $\beta_i(y)= c_i y+ z_i'$ where $c_1, c_2 \in \mathbb F$ and $z_i, z_i' \in Z(L)$ for $i=1,2$.
    $$\begin{aligned}
        [\beta_1(x), \beta_2(x)] &= [c_1 x+ z_1, c_2 x+z_2]=0\\
        [\beta_1(y), \beta_2(y)] &= [c_1 y+ z_1', c_2 y+z_2']=0
    \end{aligned}$$
    Thus $[\beta_2\beta_1(u), u]= [\beta_1(u), \beta_2(u)]=0$ for all $u \in L$, i.e.,  $\beta_2\beta_1 \in \mathcal{A}(L)$ for all $\beta_2, \beta_1 \in \mathcal{A}(L)$. Hence, the lemma.
\end{proof}


\begin{lemma}\label{shush2} Let $L$ be a nilpotent Lie algebra over field $\mathbb F$ such that $char \, \mathbb F \neq 2$. Assume that, $[Z_2(L): Z(L)]=2$ and $Z(L)= L^k$ for some $k \geq 1$. Then $\mathcal{A}(L)$ is a subgroup of $Aut(L)$.    
\end{lemma}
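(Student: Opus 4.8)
The plan is to prove that $\mathcal{A}(L)$ is closed under composition; since the excerpt already records that $\alpha^{-1}\in\mathcal{A}(L)$ for every $\alpha\in\mathcal{A}(L)$ and the identity clearly lies in $\mathcal{A}(L)$, closure suffices. So I would fix $\alpha,\beta\in\mathcal{A}(L)$ and aim to show $\beta\alpha\in\mathcal{A}(L)$, i.e. $[\beta\alpha(x),x]=0$ for all $x$. Applying \lemref{firstlem}(i) to $\beta$ (with the pair $\alpha(x),x$) gives $[\beta\alpha(x),x]=[\alpha(x),\beta(x)]$, so the entire problem collapses to the single identity $[\alpha(x),\beta(x)]=0$ for every $x\in L$.

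To analyze $[\alpha(x),\beta(x)]$ I would set $z_1=\alpha(x)-x$ and $z_2=\beta(x)-x$. By \corref{shush1} both lie in $Z_2(L)$, and $[z_1,x]=[\alpha(x),x]=0$, $[x,z_2]=-[\beta(x),x]=0$. Expanding $[\alpha(x),\beta(x)]=[x+z_1,x+z_2]$ and cancelling these two terms yields $[\alpha(x),\beta(x)]=[z_1,z_2]$, so it remains to show $[z_1,z_2]=0$. I would also record the structural fact that $Z_2(L)'\subseteq Z(L)$ (for $a,b\in Z_2(L)$, $[a,b]\in[Z_2(L),L]\subseteq Z(L)$). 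Now invoke the hypothesis $[Z_2(L):Z(L)]=2$: choose $u,v\in Z_2(L)$ whose classes form a basis of $Z_2(L)/Z(L)$, put $w=[u,v]\in Z(L)$, and write $z_1=a_1u+a_2v+c_1$, $z_2=b_1u+b_2v+c_2$ with $a_i,b_i\in\mathbb F$ and $c_1,c_2\in Z(L)$. All central contributions drop out of the brackets, giving the clean formula $[z_1,z_2]=(a_1b_2-a_2b_1)\,w$. Hence it suffices to show $(a_1,a_2)$ and $(b_1,b_2)$ are linearly dependent whenever $w\neq 0$.

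The decisive step is a case split on whether $x$ centralizes $Z_2(L)$, writing $C_L(Z_2(L))=\{x\in L:[x,Z_2(L)]=0\}$. If $x\in C_L(Z_2(L))$, then for every $y$ \lemref{firstlem}(ii) gives $[z_1,y]=[\alpha(x)-x,y]=[x,\alpha(y)-y]$, and since $\alpha(y)-y\in Z_2(L)$ by \corref{shush1} while $x$ kills $Z_2(L)$, this bracket is $0$; thus $z_1\in Z(L)$ and $[z_1,z_2]=0$. If $x\notin C_L(Z_2(L))$, then at least one of $[u,x],[v,x]$ is nonzero, so the linear map $\mathbb F^2\to Z(L)$, $(s,t)\mapsto s[u,x]+t[v,x]$, is nonzero and has kernel of dimension at most one; since $[z_1,x]=a_1[u,x]+a_2[v,x]=0$ and $[z_2,x]=b_1[u,x]+b_2[v,x]=0$ place both $(a_1,a_2)$ and $(b_1,b_2)$ in that kernel, they are dependent and $a_1b_2-a_2b_1=0$. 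In either case $[z_1,z_2]=0$, whence $[\alpha(x),\beta(x)]=0$ and $\beta\alpha\in\mathcal{A}(L)$.

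The main obstacle is the case $x\in C_L(Z_2(L))$, where the relation $[z_1,x]=0$ carries no information about $(a_1,a_2)$; the resolution is the transfer trick via \lemref{firstlem}(ii), which moves the bracket onto $\alpha(y)-y\in Z_2(L)$ and lets the centralizing property of $x$ finish it. I would remark that, as organized above, the argument seems to use only $\mathrm{char}\,\mathbb F\neq 2$ (through the cited lemmas) together with $\dim Z_2(L)/Z(L)=2$; the hypothesis $Z(L)=L^k$ does not appear to enter, so I would revisit the authors' proof to see whether this condition is genuinely needed or is retained to align the lemma with the coclass stratification exploited in the later sections.
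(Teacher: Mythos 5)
Your proof is correct, and it takes a genuinely different route from the paper's. The paper leans entirely on the hypothesis $Z(L)=L^k$, which your argument never touches: for $k=1$, $Z(L)=L'$ gives $Z_2(L)=L$, hence $[L:Z(L)]=[Z_2(L):Z(L)]=2$, and \lemref{wholeL} applies; for $k\geq 2$, the term $L^{k-1}$ lies in $Z(Z_2(L))$ and has codimension at most $1$ in $Z_2(L)$ (since $Z(L)=L^k\subsetneq L^{k-1}\subseteq Z_2(L)$ and $[Z_2(L):Z(L)]=2$), so $Z_2(L)$ is abelian and \lemref{abelian} applies. You instead verify closure directly: after the same reduction $[\beta\alpha(x),x]=[\alpha(x),\beta(x)]=[z_1,z_2]$ that the paper uses inside \lemref{abelian}, you obtain the determinant identity $[z_1,z_2]=(a_1b_2-a_2b_1)[u,v]$ and kill it either via \lemref{firstlem}(ii) (when $[x,Z_2(L)]=0$, which forces $z_1\in Z(L)$) or by noting that $(a_1,a_2)$ and $(b_1,b_2)$ both lie in the at-most-one-dimensional kernel of $(s,t)\mapsto s[u,x]+t[v,x]$ (when $[x,Z_2(L)]\neq 0$). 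Every step checks out, so your closing suspicion is vindicated: the condition $Z(L)=L^k$ is needed only for the paper's reduction — it is precisely what makes $Z_2(L)$ abelian, or forces $[L:Z(L)]=2$ — and you have actually proved the stronger statement that $[Z_2(L):Z(L)]=2$ alone implies $\mathcal{A}(L)$ is a subgroup of $Aut(L)$. This extra generality is not vacuous: $L=H_3\oplus\mathbb F$ (the $3$-dimensional Heisenberg algebra plus a one-dimensional abelian summand) has $[Z_2(L):Z(L)]=2$ with $Z_2(L)=L$ nonabelian, while $Z(L)$ equals no term of the lower central series, so it is covered by your argument but not by the lemma as stated. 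What the paper's route buys is economy, since both cases drop into lemmas already proved; what yours buys is strength and uniformity, handling nonabelian $Z_2(L)$ by a single computation rather than excluding it by hypothesis (and, since \lemref{firstlem} and \corref{shush1} are proved in the paper without any characteristic restriction, your argument does not appear to need $\mathrm{char}\,\mathbb F\neq 2$ either).
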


\begin{proof} If $k=1$ then $Z(L)=L'$ and $[L: Z(L)]=2$. By \lemref{wholeL}, the result follows. Assume $k \geq 2$. Since $Z(L)=L^k$, $L^{k-1} \subseteq Z_2(L)$. Observe that $L^{k-1} \subseteq Z(Z_2(L))$
and $[Z_2(L): L^{k-1}] \leq 1$. Hence, $Z_2(L)$ is abelian. By \lemref{abelian}, the result follows.  
\end{proof}

\medskip

Typically, $\mathcal{A}(L)$ does not necessarily act as a subgroup of $Aut(L)$. The subsequent theorem illustrates that, in the case of Heisenberg Lie algebra containing a minimum of $4$ non-central elements, $\mathcal{A}(L)$ fails to be a subgroup of $Aut(L)$.

\begin{theorem}\label{heisenberg}
    Let $L$ be a  Heisenberg Lie algebra of dimension $n$ ($ n \geq 5$), over field $\mathbb F$ and $[L: Z(L)] \geq 4$, then $\mathcal{A}(L)$ is not a subgroup of $Aut(L)$.
 \end{theorem}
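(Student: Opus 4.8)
The plan is to pass to the symplectic model of a Heisenberg Lie algebra and, using the hypothesis $[L:Z(L)]\ge 4$, to produce two commuting automorphisms whose composition fails to be commuting.

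\emph{Step 1 (symplectic setup).} Since $L$ is Heisenberg we have $Z(L)=L'$ and $\dim Z(L)=1$; I fix a generator $z$ of $Z(L)$ and set $V=L/Z(L)$, so that $\dim V=[L:Z(L)]=2m\ge 4$. The Lie bracket descends to a nondegenerate alternating form $\omega$ on $V$ determined by $[u,v]=\omega(\bar u,\bar v)\,z$. Any $\alpha\in Aut(L)$ multiplies $z$ by a scalar $\lambda\in\mathbb F^{\times}$ and induces $A=\bar\alpha\in\mathrm{GL}(V)$ satisfying $\omega(A\bar u,A\bar v)=\lambda\,\omega(\bar u,\bar v)$; conversely, every such conformal symplectic $A$ lifts to an automorphism. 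Translating basis vectors by $Z(L)$ plays no role below, since $z$ is central.

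\emph{Step 2 (reformulating the commuting condition).} For $\alpha\in Aut(L)$ and $x\in L$, centrality of $z$ gives $[\alpha(x),x]=\omega(A\bar x,\bar x)\,z$. Hence $\alpha\in\mathcal A(L)$ if and only if the form $\bar x\mapsto\omega(A\bar x,\bar x)$ vanishes identically; because $\operatorname{char}\mathbb F\neq 2$, this is equivalent to $A$ being \emph{self-adjoint} for $\omega$, i.e.\ $\omega(Au,v)=\omega(u,Av)$ for all $u,v\in V$ (this is precisely \lemref{firstlem}(i) read on $V$). Now if $\alpha,\beta\in\mathcal A(L)$ induce self-adjoint maps $A,B$, then $\beta\alpha$ induces $BA$, and $\omega(BAu,v)=\omega(Au,Bv)=\omega(u,ABv)$ for all $u,v$; by nondegeneracy $BA$ is self-adjoint, i.e.\ $\beta\alpha\in\mathcal A(L)$, if and only if $AB=BA$. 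So it suffices to exhibit two $\omega$-self-adjoint conformal symplectic maps that do not commute.

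\emph{Step 3 (the explicit pair).} Choose a symplectic basis $x_1,y_1,\dots,x_m,y_m$ of $V$ with $\omega(x_i,y_j)=\delta_{ij}$ and $\omega(x_i,x_j)=\omega(y_i,y_j)=0$. Using $m\ge 2$, let $\alpha$ act by $x_2\mapsto -x_2,\ y_2\mapsto -y_2$ and fix every other basis vector and $z$; let $\beta$ interchange $x_1\leftrightarrow x_2$ and $y_1\leftrightarrow y_2$ and fix the rest and $z$. Both preserve $\omega$ (with $\lambda=1$), hence are automorphisms, and both have the block form $\bigl(\begin{smallmatrix}P&0\\0&P\end{smallmatrix}\bigr)$ with $P$ symmetric, so both are $\omega$-self-adjoint; thus $\alpha,\beta\in\mathcal A(L)$. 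For the witness $w=x_1+y_2$ one computes $\alpha(w)=x_1-y_2$ and $\beta\alpha(w)=x_2-y_1$, whence
\[
[\beta\alpha(w),w]=\omega(x_2-y_1,\,x_1+y_2)\,z=2z\neq 0,
\]
since $\operatorname{char}\mathbb F\neq 2$. Therefore $\beta\alpha\notin\mathcal A(L)$ although $\alpha,\beta\in\mathcal A(L)$, so $\mathcal A(L)$ is not closed under composition and hence is not a subgroup of $Aut(L)$.

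\emph{Main obstacle.} The argument is entirely dimensional, and the point requiring care is why $[L:Z(L)]\ge 4$ is indispensable. For $m=1$ (the $3$-dimensional Heisenberg algebra) the self-adjointness equations $A^{\mathsf T}J=JA$ for $A=\bigl(\begin{smallmatrix}p&q\\ r&s\end{smallmatrix}\bigr)$ and $J=\bigl(\begin{smallmatrix}0&1\\-1&0\end{smallmatrix}\bigr)$ force $q=r=0$ and $s=p$, i.e.\ $A=pI$ is scalar; all such maps commute, no obstruction exists, and $\mathcal A(L)$ is in fact a group there. The substantive content is thus the equivalence ``commuting $\Leftrightarrow$ $\omega$-self-adjoint'' together with the reduction of closure to the commutativity $AB=BA$; once these are in place, producing the explicit non-commuting pair is routine, and the hypothesis $\dim V\ge 4$ is exactly what gives room for two noncommuting self-adjoint conformal symplectic maps.
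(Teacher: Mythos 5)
Your proof is correct, and at bottom it pursues the same strategy as the paper's: exhibit two explicit commuting automorphisms whose composite fails to be commuting, witnessed on a single element. The difference is in how you arrive at the pair. The paper simply writes down two maps $\beta_1,\beta_2$ on a fixed basis and checks membership in $\mathcal{A}(L)$ by direct computation; you first establish a structural dictionary --- commuting automorphisms correspond exactly to $\omega$-self-adjoint conformal symplectic maps on $V=L/Z(L)$, and closure of $\mathcal{A}(L)$ under a given composition is equivalent to commutativity of the induced maps --- so that the counterexample reduces to finding two non-commuting self-adjoint symplectic matrices, which is routine once $\dim V\geq 4$. This buys two things the paper's argument does not make visible: a transparent explanation of why the hypothesis $[L:Z(L)]\geq 4$ is exactly the right one (for $\dim V=2$ every self-adjoint map is scalar, so no obstruction can exist), and in effect a description of $\mathcal{A}(L)$ for all Heisenberg algebras rather than an ad hoc verification. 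One caveat: in Step 1 you assert $\dim Z(L)=1$, but the paper's notion of Heisenberg algebra (see its remark on isoclinism, and the basis $u_1,\dots,u_{2k},z_1,\dots,z_{n-2k}$ in its proof) allows $Z(L)$ to be strictly larger than $L'$; indeed, if $Z(L)=L'$ were forced, the hypothesis $[L:Z(L)]\geq 4$ would already follow from $n\geq 5$ and be redundant. This does not damage your argument, since all you actually use is that $V=L/Z(L)$ carries a nondegenerate alternating $L'$-valued form and has dimension at least $4$; the only adjustment needed is that your lifts should fix $Z(L)$ pointwise (or at least preserve it) rather than merely fix the single generator $z$.
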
   
\begin{proof}
Since $L$ is a  Heisenberg Lie algebra of dimension $n$, $\dim L'=1$ and $L' \subseteq Z(L)$. Then $L$ has a basis $\{u_1, u_2, \cdots, u_{2k}, z_1, z_2, \cdots, z_{n-2k} \}$ such that 
$$[u_{2i-1}, u_{2i}]= z_1, \forall ~  1 \leq i \leq k; \quad [z_j, L]=0, \forall ~ j \geq 1;$$ $$ [u_i, u_j]=0 \text{ for }
(i,j) \notin \{ (2i-1, 2i), (2i, 2i-1)\}_{1 \leq i \leq k} $$
 Observe that $L'= \mathbb F z_1$. 
 Consider two maps $\beta_1, \beta_2: L \longrightarrow L$ as follows,
$$\begin{array}{lllll}
     \beta_1(u_1)=u_1+u_3,& \beta_2(u_2)=u_2 & \beta_1(u_3)=-u_3, &\beta_1(u_4)= -u_4+u_2, &\beta_1(u_j)=u_j, j \geq 5  \\
     \beta_2(u_1)=u_1+u_4,&  \beta_2(u_2)=u_2, &\beta_2(u_3)=-u_1-u_3, &\beta_2(u_4)=-u_4, &\beta_2(u_j)=u_j, j \geq 5 
\end{array}$$
It is easy to check that $\beta_1, \beta_2 \in \mathcal{A}(L).$ But $[u_1, \beta_1\beta_2(u_1)]= z_1 \neq 0$. Thus, $\beta_1\beta_2 \notin \mathcal{A}(L).$ Hence the theorem.
\end{proof} 

\medskip

\section{Nilpotent Lie algebra of coclass 1}\label{sec_coclass1}


\begin{theorem} Let $L$ be $n$-dimensional, nilpotent Lie algebra with coclass $1$ over the field $\mathbb F$ of characteristic different from $2$. Then $\mathcal{A}(L)= Aut_c(L)$.   
\end{theorem}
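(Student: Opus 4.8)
The reverse inclusion $Aut_c(L)\subseteq\mathcal A(L)$ always holds, so the plan is to prove $\mathcal A(L)\subseteq Aut_c(L)$; that is, I want to upgrade the conclusion of \corref{shush1} from $\alpha(x)-x\in Z_2(L)$ to $\alpha(x)-x\in Z(L)$ for every $\alpha\in\mathcal A(L)$ and $x\in L$.

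First I would pin down the coclass-$1$ structure. Since $L$ has maximal class, every factor $L^{i}/L^{i+1}$ $(i\ge 1)$ is one-dimensional, $Z(L)=L^{n-2}$ is one-dimensional, and (for $n\ge4$) $Z_2(L)=L^{n-3}$ is two-dimensional, so $Z_2(L)/Z(L)$ is one-dimensional. Fix $e$ spanning $Z(L)$ and $w\in Z_2(L)\setminus Z(L)$, so that $Z_2(L)=\langle w\rangle\oplus Z(L)$.

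Next, for $\alpha\in\mathcal A(L)$ the map $x\mapsto\alpha(x)-x$ is linear with image in $Z_2(L)$ by \corref{shush1}, so I can write $\alpha(x)-x=f(x)\,w+c(x)\,e$ with $f,c\colon L\to\mathbb F$ linear. Because $e\in Z(L)$ and $[x,x]=0$, the commuting condition collapses to
\[
0=[\alpha(x),x]=[\alpha(x)-x,\,x]=f(x)\,[w,x].
\]
Since $w\in Z_2(L)$, we have $[w,x]\in Z(L)=\langle e\rangle$; writing $[w,x]=h(x)\,e$ with $h$ linear gives $f(x)h(x)=0$ for all $x\in L$. As $w\notin Z(L)$ we have $[w,L]\neq0$, so $h\not\equiv0$. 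Then $L=\ker f\cup\ker h$, and since a vector space is never the union of two proper subspaces (and $\ker h\subsetneq L$), this forces $f\equiv0$. Hence $\alpha(x)-x=c(x)e\in Z(L)$, i.e.\ $\alpha\in Aut_c(L)$, completing the inclusion.

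The main obstacle is the structural input that $Z_2(L)/Z(L)$ is \emph{exactly} one-dimensional: this is what reduces $\alpha-\mathrm{id}$ to a single scalar form $f$ and powers the two-subspaces argument. It is precisely here that low dimensions degenerate --- for the $3$-dimensional Heisenberg algebra one has $Z_2(L)=L$, and a diagonal scaling automorphism doubling a pair of generators is commuting but not central, so the statement fails --- so the argument genuinely requires $n\ge4$, and the exceptional small case must be acknowledged separately.
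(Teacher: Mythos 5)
Your argument is correct for $n\ge 4$, and it takes a genuinely different route from the paper's. The paper chooses generators $u,v$ with $L=\left\langle u,v\right\rangle$, proves the centralizer identity $C_L(u)=\left\langle u,Z(L)\right\rangle$, and then shows successively that $z_1=\alpha(u)-u$ and $z_2=\alpha(v)-v$ lie in $Z(L)$, the key contradiction coming from bracketing $z_1$ against $v_1$ and invoking $L^2\neq 0$; it then passes from the generators to all of $L$. You never choose generators: you use that $Z_2(L)/Z(L)$ is \emph{exactly} one-dimensional, write $\alpha(x)-x=f(x)\,w+c(x)\,e$, and kill $f$ by the fact that a vector space over any field is never the union of two proper subspaces. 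What your approach needs in exchange is the equality $Z_2(L)=L^{n-3}$ (i.e.\ $\dim Z_2(L)=2$), which you assert but should justify in one line: $Z_2(L)/Z(L)=Z\left(L/Z(L)\right)$ and $L/Z(L)$ is again of maximal class of dimension $n-1\ge 3$, hence has one-dimensional center. The paper only ever uses the inclusion $L^{n-3}\subseteq Z_2(L)$, but in return needs the centralizer computation and (tacitly) the easy fact that centrality of $\alpha-\mathrm{id}$ on a generating set propagates to all of $L$; your version gives $\alpha(x)-x\in Z(L)$ for all $x$ at once.

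Your closing caveat about $n=3$ is not a limitation of your method; it is a genuine counterexample to the theorem as stated, and it pinpoints an error in the paper's own proof. For $n=3$, coclass $1$ forces $L$ to be the Heisenberg algebra with $[x,y]=z$ central, and the automorphism $x\mapsto 2x$, $y\mapsto 2y$, $z\mapsto 4z$ is commuting (for $u=ax+by+cz$ one computes $[\alpha(u),u]=2ab\,z-2ab\,z=0$) but not central, so $\mathcal{A}(L)\neq Aut_c(L)$ in characteristic $\neq 2$. The paper's proof breaks precisely at the step ``if $k \neq 0$, then $v_2 =0$, i.e., $L^2=0$, which is not true if $\dim L \geq 3$'': when $\dim L=3$ the nilpotency class is $2$, so $L^2=0$ \emph{does} hold, no contradiction arises, and $k\neq 0$ really occurs (the scaling map above has $k=1$). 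This is consistent with \lemref{wholeL}, whose proof shows that commuting automorphisms of such algebras are exactly the maps $u\mapsto au+z$, and these are central only when $a=1$. So the theorem should be restated to exclude $n=3$ (the cases $n\le 2$ being abelian and trivial), exactly as your proof delineates.
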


\begin{proof}
Since $L$ is $n$-dimensional nilpotent Lie algebra of maximal class, i.e., $L^{n-1} =0$ and $L^{n-2} \neq 0$. If $\dim L=1,2$, then $L$ is an abelian Lie algebra. Hence, the result follows. Now, we assume that $\dim L \geq 3$. Further $[L: L']=2$ and $[L^{i}: L^{i+1}]=1$ for all $ 1 \leq i \leq n-2$. Hence $Z(L)= L^{n-2}$ and $L^{n-3} \subseteq Z_2(L)$. Choose $u, v \in L$ such that $L=\left\langle u, v \right\rangle$ with $[u,v]:=v_1$ and $[u, v_i]:=v_{i+1} \in L^{i+1}$ for all $1 \leq i \leq n-3.$ We claim that $C_L(u)= \left\langle \ u, Z(L) \ \right\rangle.$ It is easy to check that $C_L(u) \supset \left\langle \ u, Z(L) \ \right\rangle$. For reverse inclusion, let $y \in C_L(u)$, i.e., $y \in L$ such that $[y,u]=0$. Thus $y= b_1 u+ b_2 v+ \sum\limits_{i=1}^{n-2} a_i v_i$. $$\begin{array}{rl}
         0 &= [y,u]  \\
          & = [b_1 u+b_2 v+\sum\limits_{i=1}^{n-2} a_i v_i, u ]\\
          &= b_2 [v,u]+ \sum\limits_{i=1}^{n-2} a_i [v_i, u]\\
          &= b_2 v_1+ \sum\limits_{i=1}^{n-3} a_i v_{i+1}
     \end{array}$$
     But $v_i$'s are linearly independent. Hence $b_2= a_1= \cdots = a_{n-3}=0$. Thus, $y= b_1 u+ a_{n-2}v_{n-2} \in \left\langle \ u, Z(L) \ \right\rangle.$ Hence $C_L(u) \subseteq \left\langle \ u, Z(L) \ \right\rangle$. Hence, the claim follows.\\
 Let $\alpha \in \mathcal{A}(L)$ then $[\alpha(x), x]=0$ for all $x \in L$. By \corref{shush1}, $\alpha(x)-x \in Z_2(L)$ for all $x \in L.$ Since $L= \left\langle \ u, v \ \right\rangle$, we can write
     $\alpha(u)= u+ z_1$ and $\alpha(v)= v+z_2$ where $z_1, z_2 \in Z_2(L)$. 
     $$ [u, z_1]   = [u, \alpha(u)-u]= [u, \alpha(u)]=0, $$ i.e., $z_1 \in C_L(u) =  \left\langle \ u, Z(L) \ \right\rangle$. Therefore, we can write $z_1= k u+ z$ where $k \in \mathbb F$ and $z \in Z(L).$ But $z_1 \in Z_2(L)$ then $$0=[z_1, v_1]= k [u, v_1]+[z,v_1]= k [u, v_1] =- k v_2.$$
    If $k \neq 0$, then $v_2 =0$, i.e., $L^2=0$, which is not true if $\dim L \geq 3$. Hence $k=0$, i.e., $z_1= z \in Z(L)$, implies that $\alpha(u)-u \in Z(L)$. Further,      
$$\begin{array}{rl}
    [z_2, u]= & [\alpha(v)-v, u]\\
    = & [v, \alpha(u)-u] \quad (\text{by } \lemref{firstlem})\\
    =&0 \qquad (\because \alpha(u)-u \in Z(L) )
\end{array}$$
    Thus $z_2 \in C_L(u)$, by using similar arguments as above, we can see that $z_2 \in Z(L)$, i.e., $\alpha(v)-v \in Z(L)$. Hence, $\alpha(x)-x \in Z(L)$ for all $x \in L$., i.e., $\alpha \in Aut_c(L).$ Hence $\mathcal{A}(L) \subseteq Aut_c(L)$. By definition, reverse inclusion holds. Hence, the theorem.    
\end{proof}
\medskip

\section{Nilpotent Lie algebra of coclass 2}\label{sec_coclass2}


\begin{theorem}
    Let $L$ be $n$-dimensional, nilpotent Lie algebra with coclass $2$ (nilpotency class $n-2$), over field $\mathbb F$ of characteristic different from $2$. Then $\mathcal{A}(L)$ is a subgroup of $Aut(L)$.
\end{theorem}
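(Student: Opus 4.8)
The plan is to reduce, exactly as in the proof of \lemref{abelian}, the subgroup property to a single commutator identity, and then to pin down the structure of $Z_2(L)$ for coclass $2$ tightly enough that one of the structural lemmas \lemref{abelian}, \lemref{wholeL}, \lemref{shush2} always applies. Concretely, fix $\alpha,\beta\in\mathcal{A}(L)$ and $x\in L$, and write $\alpha(x)=x+z_1$, $\beta(x)=x+z_2$ with $z_1,z_2\in Z_2(L)$ by \corref{shush1}. As in \lemref{abelian}, $[z_1,x]=[z_2,x]=0$, so $[\beta\alpha(x),x]=[\alpha(x),\beta(x)]=[z_1,z_2]$; hence $\beta\alpha\in\mathcal{A}(L)$ for all such $\alpha,\beta$ as soon as $Z_2(L)$ is abelian. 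Thus it suffices to treat the case where $Z_2(L)$ is non-abelian, and the whole problem becomes a question about the size and commutativity of $Z_2(L)$.

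The structural input I would establish first is the bound $\dim Z(L)\le 2$. This follows from the general principle that quotienting by the center drops the nilpotency class by exactly one: since $L^{n-3}$ is the last nonzero term of the lower central series and $[L^{n-4},L]=L^{n-3}\neq 0$, the algebra $L/Z(L)$ has class $n-3$ and dimension $n-\dim Z(L)$, hence coclass $3-\dim Z(L)$; as any nonzero nilpotent Lie algebra of dimension $\ge 2$ has coclass $\ge 1$, we get $\dim Z(L)\le 2$ (the degenerate small-dimensional cases being checked by hand). Applying the same principle to $L/Z(L)$, which again has coclass at most $2$, bounds the dimension of its center, that is $[Z_2(L):Z(L)]=\dim Z(L/Z(L))\le 2$.

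With these bounds in place I would split into cases on $\dim Z(L)$. If $\dim Z(L)=2$, then $L/Z(L)$ has coclass $1$, so its center $Z_2(L)/Z(L)$ is one-dimensional; since $Z(L)$ is central in $Z_2(L)$, a one-dimensional quotient $Z_2(L)/Z(L)$ forces $Z_2(L)$ to be abelian, and \lemref{abelian} finishes this case. If $\dim Z(L)=1$, then $[Z_2(L):Z(L)]\le 2$; when this index is $0$ or $1$ the same reasoning shows $Z_2(L)$ is abelian and \lemref{abelian} applies, while when the index equals $2$ and $Z_2(L)$ happens to be non-abelian I would observe that $Z(L)$, being one-dimensional and containing the one-dimensional last term $L^{n-3}$ of the lower central series, must equal $L^{n-3}=L^{k}$ with $k=n-3$; thus the hypotheses $[Z_2(L):Z(L)]=2$ and $Z(L)=L^k$ of \lemref{shush2} are met, and that lemma closes the argument.

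The step I expect to be the main obstacle is making the dimension count and the class-reduction-under-quotient argument fully rigorous in the small-dimensional and degenerate regimes, where $L/Z(L)$ or $L/Z_2(L)$ may fail to be genuinely coclass $2$ (for instance when it collapses to something one-dimensional or abelian); these boundary cases, roughly $n\le 5$, would be handled separately, either directly or via \lemref{wholeL} in the class-two situation $L'\subseteq Z(L)$, $[L:Z(L)]=2$. The conceptual core, by contrast, is the clean observation that coclass $2$ forces $\dim Z(L)\le 2$ and $[Z_2(L):Z(L)]\le 2$, which leaves essentially only the single genuinely non-abelian configuration handled by \lemref{shush2}.
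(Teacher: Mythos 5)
Your proof is correct, and its skeleton is the same as the paper's: reduce everything to \lemref{abelian} when $Z_2(L)$ is abelian, and to \lemref{shush2} in the one remaining configuration $[Z_2(L):Z(L)]=2$, $Z(L)=L^{n-3}$. The difference lies in how the structural bounds are obtained. The paper notes $L^{n-3}\subseteq Z(L)$ and $L^{n-4}\subseteq Z_2(L)$ and then asserts $2\le \dim Z_2(L)\le 3$, splitting into cases on $\bigl(\dim Z_2(L),\dim Z(L)\bigr)$; you instead derive $\dim Z(L)\le 2$ and $[Z_2(L):Z(L)]\le 2$ from the coclass-drop-under-central-quotient principle and split on $\dim Z(L)$. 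Your route buys something real: the paper's asserted upper bound $\dim Z_2(L)\le 3$ is in fact false when $n=4$, since there the nilpotency class is $2$, so $L'\subseteq Z(L)$ and $Z_2(L)=L$ has dimension $4$; consequently the paper's three cases do not cover $n=4$, whereas you explicitly quarantine the degenerate low-dimensional case and dispose of it with \lemref{wholeL}. The only detail your plan leaves unverified is that the hypotheses of \lemref{wholeL} actually hold at $n=4$: there $L'\subseteq Z(L)$, and if $\dim Z(L)$ were $1$ the bracket would induce a nondegenerate alternating form on the $3$-dimensional space $L/Z(L)$ with values in the line $L'$, which is impossible in odd dimension; hence $\dim Z(L)=2$ and $[L:Z(L)]=2$ as required. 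With that one line added your argument is complete and, unlike the proof in the paper, covers all dimensions.
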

\begin{proof} Let $L$ be $n$-dimensional, nilpotent Lie algebra with coclass $2$, i.e. $L^{n-2}=0$ and $L^{n-3} \neq  0.$ If $\dim L=1,2,3$ then $L$ is abelian Lie algebra. Assume $\dim L \geq 4$. Let $\alpha, \beta \in \mathcal{A}(L).$ Note that $L^{n-3} \subseteq Z(L)$ and $L^{n-4} \subseteq Z_2(L).$  Thus $2 \leq \dim Z_2(L) \leq 3$. If $\dim Z_2(L)=2$ then $Z_2(L)=\left\langle \ x, Z(L)\ \right\rangle$  for some $0 \neq x \in Z_2(L) \setminus Z(L)$ as $0 \neq Z(L) \subseteq Z_2(L)$. This implies that $Z_2(L)$ is abelian. By \lemref{abelian}, the result follows in this case. If $\dim Z_2(L)=3$ and $\dim Z(L)=2$ then $[Z_2(L): Z(L)]=1$ implies that $Z_2(L)$ is abelian. By \lemref{abelian}, the result also follows in this case. If $\dim Z_2(L)=3$ and $\dim Z(L)=1$, i.e., $[Z_2(L): Z(L)]=2$ and $Z(L)= L^{n-2}.$ By \lemref{shush2}, result follows.
\end{proof}
\medskip 

\section{Nilpotent Lie algebra of coclass 3}\label{sec_coclass3}

In coclass $3$, $\mathcal{A}(L)$ is not always a subgroup of $Aut(L)$. The subsequent example serves as a counterinstance, featuring $L$, a $5$-dimensional nilpotent Lie algebra of coclass $3$ with a $1$-dimensional center.

\begin{example}
    Let $L$ be a Lie algebra generated by $$\left\langle\ x_1, x_2, x_3, x_4, x_5: \begin{aligned}
        &[x_1, x_2]= x_5=[x_3,x_4]\\
       & [x_i, x_j]=0 \text{ for }
\{i,j\} \neq \{1,2\}, \{3,4\}    \end{aligned} \right\rangle$$ over field $\mathbb F$. Observe that $Z(L)=L'= \mathbb F x_5$ and $L= Z_2(L)$. Consider two maps $\beta_1, \beta_2: L \longrightarrow L$ as follows,
$$\begin{array}{lllll}
     \beta_1(x_1)=x_3,&  \beta_1(x_2)=x_4, &\beta_1(x_3)=x_1, &\beta_1(x_4)=x_2, &\beta_1(x_5)=x_5 \\
     \beta_2(x_1)=x_1+x_4,&  \beta_2(x_2)=x_2, &\beta_2(x_3)=-x_2-x_3, &\beta_2(x_4)=-x_4, &\beta_2(x_5)=x_5 
\end{array}$$
It is easy to check that $\beta_1, \beta_2 \in \mathcal{A}(L).$ But $[x_1, \beta_1\beta_2(x_1)]= x_5 \neq 0$. Hence, $\beta_1\beta_2 \notin \mathcal{A}(L).$
\end{example}\qed

\begin{theorem}\label{dim_less_5}
    For a nilpotent Lie algebra $L$ with coclass $3$ over field $\mathbb F$ of characteristic different from $2$, if the dimension of $L$ is at most $5$, then the following conditions imply that $\mathcal{A}(L)$ is a subgroup of $Aut(L)$: \begin{itemize}
\item[(i)] $\dim L \leq 4.$ 
\item[(ii)] $\dim L=5$ and $\dim Z(L) \neq 1.$ 
\end{itemize} In other words, if $L$ satisfies either of these conditions, then the set of commuting automorphisms of $L$ form a subgroup of the full automorphism group of $L$.
\end{theorem}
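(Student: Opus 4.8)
The plan is to split along the nilpotency class, which for coclass $3$ equals $\dim L-3$, and to route every configuration into one of the structural lemmas already proved, leaving a single genuinely new case to settle by hand. For part (i), if $\dim L\le 4$ then the nilpotency class is $\dim L-3\le 1$, so $L$ is abelian; hence $Z_2(L)=L$ is abelian and \lemref{abelian} closes this case at once. (Dimensions $\le 3$ cannot support coclass $3$ at all, so the only algebra that actually occurs here is the $4$-dimensional abelian one.)

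For part (ii), $\dim L=5$ forces nilpotency class $2$, so $L'\subseteq Z(L)$ and $Z_2(L)=L$. I would first pin down the admissible values of $\dim Z(L)$. Since $L'\neq 0$, $L$ is non-abelian, so $\dim Z(L)\le 4$; and $\dim Z(L)=4$ would give $[L:Z(L)]=1$, whence $L'=0$, a contradiction. Combined with the hypothesis $\dim Z(L)\neq 1$, this leaves $\dim Z(L)\in\{2,3\}$. If $\dim Z(L)=3$ then $[L:Z(L)]=2$, and because $L'\subseteq Z(L)$, \lemref{wholeL} applies directly.

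The remaining, and main, case is $\dim Z(L)=2$. Here I would first rule out $\dim L'=1$: if $L'=\mathbb F z$, the bracket is an $\mathbb F$-valued alternating form on $L$ whose radical is exactly $Z(L)$, so it would have odd rank $\dim L-\dim Z(L)=3$, which is impossible. Hence $\dim L'=2$, i.e. $L'=Z(L)$ and $[L:Z(L)]=3$, and none of \lemref{abelian}, \lemref{wholeL}, \lemref{shush2} applies ($Z_2(L)=L$ is non-abelian, $[L:Z(L)]\neq 2$, and $[Z_2(L):Z(L)]=3\neq 2$). The key structural observation is that the induced alternating map $\Lambda^2(L/Z(L))\to L'$ is surjective with one-dimensional kernel; since every $2$-vector in a $3$-dimensional space is decomposable, that kernel is spanned by a decomposable element, which after a change of basis of $L/Z(L)$ I may take to be $\bar e_2\wedge\bar e_3$. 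Completing to a basis $\{e_1,e_2,e_3,f_1,f_2\}$ of $L$ with $f_1,f_2\in Z(L)$, this yields the normal form
$$[e_1,e_2]=f_1,\qquad [e_1,e_3]=f_2,\qquad [e_2,e_3]=0.$$

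With this basis fixed the argument becomes computational. For $\alpha,\beta\in\mathcal{A}(L)$, \lemref{firstlem} gives $[\beta\alpha(x),x]=[\alpha(x),\beta(x)]$; writing $a=\alpha(x)-x$ and $b=\beta(x)-x$ and using $[a,x]=[b,x]=0$, this collapses to $[\beta\alpha(x),x]=[a,b]$, so it suffices to prove $[a,b]=0$ for every $x$. Expanding in the basis, $[a,x]=[b,x]=0$ read $a_1x_2=a_2x_1$, $a_1x_3=a_3x_1$ (and the same with $b$), while $[a,b]=(a_1b_2-a_2b_1)f_1+(a_1b_3-a_3b_1)f_2$; a short case split on whether the coordinate $x_1$ is nonzero (and, when $x_1=0$, on whether $x$ is central, in which case $a,b\in Z(L)$ by \lemref{firstlem}) forces $[a,b]=0$. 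Together with closure under inverses, already recorded in the paper, this shows $\mathcal{A}(L)$ is a subgroup. I expect the main obstacle to be exactly the isolation and normalization of this last configuration: recognizing it as the unique structure escaping the three structural lemmas, ruling out the spurious $\dim L'=1$ possibility by the rank-parity argument, and producing the adapted basis that makes the final bracket identity transparent.
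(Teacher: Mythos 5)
Your proof is correct, and its skeleton coincides with the paper's: dimension at most $4$ forces $L$ abelian, and in dimension $5$ one splits on $[L:Z(L)]$, using \lemref{wholeL} when $[L:Z(L)]=2$ and a direct computation when $[L:Z(L)]=3$ (the excluded case $\dim Z(L)=1$ being precisely the Heisenberg situation of \thmref{heisenberg}). Where you genuinely diverge is in the execution of the case $\dim Z(L)=2$, and there your version is tighter on two counts. First, the paper splits this case into $\dim L'=2$ and $\dim L'=1$, disposing of the latter by ``a similar calculation''; your rank-parity argument shows that subcase is actually vacuous, since a one-dimensional $L'$ would make the induced bracket form a nondegenerate alternating form on the $3$-dimensional space $L/Z(L)$, which cannot exist in odd dimension. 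Second, your closure argument is complete at exactly the point where the paper's is not: from the diagonal conditions $[\alpha(x),x]=0$, $x\in\{u,v,w\}$, the paper deduces only $a_{12}^\alpha=a_{13}^\alpha=a_{21}^\alpha=a_{31}^\alpha=0$, then checks $[\beta(x),\gamma(x)]=0$ for $x\in\{u,v,w\}$ and asserts it ``for all $x\in L$''. That inference fails for maps of the displayed shape: e.g.\ $\beta(u)=u$, $\beta(v)=v+w$, $\beta(w)=w$ (suitably extended to the center) has the stated form and kills the three basis brackets, yet $[\beta(u+v),u+v]=-[u,w]\neq 0$; so the paper implicitly needs the extra off-diagonal relations from \lemref{firstlem}(i), which force every $\alpha\in\mathcal{A}(L)$ to act as a scalar plus a central map, before bilinearity can be invoked. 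Your reduction to proving $[\alpha(x)-x,\beta(x)-x]=0$ pointwise, with the case split on the coordinate $x_1$ (and \lemref{firstlem}(iii) when $x$ is central), handles arbitrary $x$ from the start and so avoids this gap entirely. The only extra cost is the normal-form preparation via decomposability of bivectors in dimension $3$ --- but the paper's ``without loss of generality $[v,w]=0$'' silently relies on the same fact.
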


\begin{proof}
    Since $L$ is nilpotent Lie algebra of coclass $3$, i.e, $L^{n-3}=0$. If $\dim L \leq 4$, then $L$ is an abelian Lie algebra. Assume $\dim L=5$. Then, $L^2=0$ and $0 \neq L' \subseteq Z(L)$, $1 \leq \dim L' \leq 3$ as $\dim L \leq 5$. Observe that $2 \leq [L:Z(L)] \leq 4$.
    \begin{itemize}
        \item[Case(a):] If $[L: Z(L)]=2$ then by \lemref{wholeL}, result holds.
         \item[Case(b):] If $[L: Z(L)]=3$ then $\dim L' \leq 2$. Choose $u,v,w \in L$ such that $L= \left\langle u, v,w, Z(L) \right\rangle$. Then $[u,v], [u,w], [v,w] \in L' \subseteq Z(L)$. Let $\alpha \in \mathcal{A}(L)$. Take $$\begin{array}{cc}
             \alpha(u) &= a^\alpha_{11} u +a^\alpha_{12} v +a^\alpha_{13} w+ z_1^\alpha\\
             \alpha(v) &= a_{21}^\alpha u +a_{22}^\alpha v +a_{23}^\alpha w+ z_2^\alpha \\
             \alpha(w) &= a_{31}^\alpha u +a_{32}^\alpha v +a_{33}^\alpha w+ z_3^\alpha  
         \end{array}$$  
         where $a_{ij}^\alpha \in \mathbb F$ for all $1 \leq i,j \leq 3$ and $z_1^\alpha, z_2^\alpha, z_3^\alpha \in Z(L)$. 
         \begin{equation}\label{shush16}\begin{array}{cc}
            0  &= [\alpha(u), u]= -a_{12}^\alpha [u,v]-a_{13}^\alpha [u,w]  \\
            0  & = [\alpha(v), v]= a_{21}^\alpha [u,v]-a_{23}^\alpha [v,w]\\
            0 &= [\alpha(w),w]= a_{31}^\alpha [u,w]+a_{32}^\alpha [v,w]
         \end{array} \end{equation}
        If $\dim L'=2$, we can assume $[v,w]=0$ without loss of generality. By solving system \eqref{shush16}, we get $a_{21}^\alpha=0=a_{31}^\alpha$ and $a_{12}^\alpha=0=a_{13}^\alpha$ as $[u,v]$ and $[u,w]$ are linearly independent vectors. Thus for any  $\alpha \in \mathcal{A}(L)$, 
         $$ \alpha(u) = a^\alpha_{11} u + z_1^\alpha, \quad
             \alpha(v) = a_{22}^\alpha v +a_{23}^\alpha w+ z_2^\alpha, \quad
             \alpha(w) = a_{32}^\alpha v +a_{33}^\alpha w+ z_3^\alpha $$  
         Let $\beta, \gamma \in \mathcal{A}(L)$ then 
         $$ \beta(u) = a^\beta_{11} u + z_1^\beta, \quad
             \beta(v) = a_{22}^\beta v +a_{23}^\beta w+ z_2^\beta, \quad
             \beta(w) = a_{32}^\beta v +a_{33}^\beta w+ z_3^\beta, $$
             $$ \gamma(u) = a^\gamma_{11} u + z_1^\gamma, \quad
             \gamma(v) = a_{22}^\gamma v +a_{23}^\gamma w+ z_2^\gamma, \quad
             \gamma(w) = a_{32}^\gamma v +a_{33}^\gamma w+ z_3^\beta $$ where $a_{ij}^\beta, a_{ij}^{\gamma} \in \mathbb F$  and $z_i^\gamma, z_i^\beta \in Z(L)$ for all $1 \leq i,j \leq 3$.
             Observe that $$[\beta(x), \gamma(x)]=0, \quad \forall x \in \{ u,v,w\}. $$ Hence $[\gamma \beta (x), x]= [\beta(x), \gamma(x)]=0$ for all $x \in L$, i.e., $\gamma \beta \in \mathcal{A}(L)$ for all $\gamma, \beta \in \mathcal{A}(L).$ Hence the result holds in this case. If $\dim L'=1$, we can assume $[v,w]=0=[u,w]$ without loss of generality. We can easily prove the result in this case by doing a similar calculation.
             \item[Case(c):] If $[L: Z(L)]=4$ then $\dim Z(L)=1$ implies that $L'= Z(L)$. Then $L$ is Heisenberg Lie algebra up to isoclinisim. By \thmref{heisenberg}, $\mathcal{A}(L)$ is not a subgroup of $Aut(L).$
    \end{itemize}  
\end{proof}

\begin{corollary}  Let $L$ be $5$-dimensional, nilpotent Lie algebra of coclass $3$ over field $\mathbb F$. $\mathcal{A}(L)$ is not a subgroup of $Aut(L)$ if and only if $L'= Z(L)$ and $\dim Z(L)=1$.    
\end{corollary}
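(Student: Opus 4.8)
The plan is to prove the corollary as a clean consequence of \thmref{dim_less_5} together with \thmref{heisenberg}, treating the two directions of the ``if and only if'' separately. The statement to establish is that for a $5$-dimensional nilpotent Lie algebra $L$ of coclass $3$, the set $\mathcal{A}(L)$ fails to be a subgroup of $Aut(L)$ exactly when $L' = Z(L)$ and $\dim Z(L) = 1$.

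For the \emph{reverse direction}, assume $L' = Z(L)$ and $\dim Z(L) = 1$. This is precisely Case(c) in the proof of \thmref{dim_less_5}: since $\dim L = 5$ and $\dim Z(L) = 1$, we have $[L : Z(L)] = 4$, and the one-dimensional derived subalgebra equals the center. As already observed there, a nilpotent Lie algebra with $\dim L' = 1$ is a Heisenberg Lie algebra up to isoclinism, and here $[L : Z(L)] = 4 \geq 4$, so \thmref{heisenberg} applies directly to give that $\mathcal{A}(L)$ is not a subgroup of $Aut(L)$. (Alternatively, the explicit Example before \thmref{dim_less_5} exhibits exactly such an $L$ with commuting automorphisms $\beta_1, \beta_2$ whose composite is not commuting.)

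For the \emph{forward direction}, I would argue by contraposition: if it is \emph{not} the case that both $L' = Z(L)$ and $\dim Z(L) = 1$, then $\mathcal{A}(L)$ \emph{is} a subgroup. Since $\dim L = 5$ forces $L^2 = 0$ and $0 \neq L' \subseteq Z(L)$, the negation of ``$L' = Z(L)$ and $\dim Z(L) = 1$'' splits into two cases. If $\dim Z(L) \neq 1$, then part (ii) of \thmref{dim_less_5} immediately yields that $\mathcal{A}(L)$ is a subgroup. If instead $\dim Z(L) = 1$ but $L' \neq Z(L)$, then $L' \subseteq Z(L)$ together with $0 \neq L'$ forces $L' = Z(L)$ (both being one-dimensional), a contradiction; hence this subcase is vacuous. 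Thus the only way to avoid the subgroup conclusion is $\dim Z(L) = 1$ with $L' = Z(L)$, which is Case(c).

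The argument is essentially a bookkeeping exercise that assembles the trichotomy $[L:Z(L)] \in \{2,3,4\}$ from the proof of \thmref{dim_less_5}: the values $2$ and $3$ correspond to $\mathcal{A}(L)$ being a subgroup (Cases (a) and (b)), while $[L:Z(L)] = 4$, equivalent under $\dim L = 5$ to $\dim Z(L) = 1$, forces $L' = Z(L)$ and lands in the Heisenberg case. The main (and only mild) subtlety to state carefully is the equivalence, under $\dim L = 5$ and $L^2 = 0$, between the index condition $[L:Z(L)] = 4$ and the pair of conditions $\dim Z(L) = 1$, $L' = Z(L)$; once that equivalence is spelled out, both directions follow from the two cited theorems with no further computation.
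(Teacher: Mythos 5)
Your proof is correct and follows essentially the same route as the paper: the ``only if'' direction is exactly the contrapositive reading of Theorem~\ref{dim_less_5}(ii) (with the subcase $\dim Z(L)=1$, $L'\neq Z(L)$ correctly dismissed as vacuous since $0\neq L'\subseteq Z(L)$), and the ``if'' direction is precisely Case~(c) of that theorem's proof, invoking Theorem~\ref{heisenberg} via the Heisenberg-up-to-isoclinism observation. Nothing is missing relative to the paper's implicit argument.
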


\medskip

\begin{remark}\label{rem_coclass3}
    For $n$-dimensional, $n\geq 6$, nilpotent Lie algebra of coclass $3$, $L^{n-3}=0$ and $0 \neq L^{n-4} \subseteq Z(L)$ and $L^{n-5} \subseteq Z(Z_2(L)) \subseteq Z_2(L).$ Hence $ 1 \leq \dim Z(L) \leq 3$ and $2 \leq \dim Z_2(L) \leq 4.$
\end{remark}

\begin{lemma}\label{imp_coclass3}
     Let $L$ be $n$-dimensional ($n \geq 6$), nilpotent Lie algebra of coclass $3$ over field $\mathbb F$ of characteristic different from $2$. If one of the following holds:
     \begin{itemize}
         \item[(i)] $\dim Z(L) \neq 1$
         \item[(ii)] $\dim Z(L) = 1$ and $ 2 \leq \dim Z_2(L) \leq 3.$
         \item[(iii)] $\dim Z(L) = 1$, $\dim Z_2(L) =4$ and $ n-3 \leq \dim L' \leq n-2.$
     \end{itemize}
     then $Z_2(L)$ is abelian.
\end{lemma}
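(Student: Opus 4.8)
The plan is to assume that $Z_2(L)$ is non-abelian and derive a contradiction in each of the three regimes, using two structural facts about the second centre. First I would record the identity $[Z_2(L),L']=0$: for $x\in Z_2(L)$ and $a,b\in L$ the Jacobi identity gives $[x,[a,b]]=[[x,a],b]+[a,[x,b]]$, and both summands vanish because $[x,a],[x,b]\in Z(L)$. In particular $Z_2(L)\cap L'\subseteq Z(Z_2(L))$, and since $[Z_2(L),Z_2(L)]\subseteq Z(L)\subseteq Z(Z_2(L))$ the algebra $Z_2(L)$ has nilpotency class at most $2$. Combined with \remref{rem_coclass3} (which supplies $L^{n-5}\subseteq Z(Z_2(L))$, $\dim Z_2(L)\le 4$ and $\dim Z(L)\le 3$) and the bound $\dim L^{n-5}=\dim(L^{n-5}/L^{n-4})+\dim L^{n-4}\ge 2$ (strict descent of the lower central series plus $L^{n-4}\neq 0$), this forms the toolkit.

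Next I would run a uniform reduction. If $Z_2(L)$ is non-abelian, the bracket descends to a nondegenerate alternating form on $\bar V:=Z_2(L)/Z(Z_2(L))$; a nonzero alternating form forces $\dim\bar V\ge 2$. On the other hand $L^{n-5}\subseteq Z(Z_2(L))$ gives $\dim Z(Z_2(L))\ge 2$, so $\dim\bar V=\dim Z_2(L)-\dim Z(Z_2(L))\le 4-2=2$. Hence $\dim\bar V=2$, $\dim Z_2(L)=4$, and $Z(Z_2(L))=L^{n-5}$ has dimension $2$. This already settles two situations: if $\dim Z_2(L)\le 3$ (in particular case (ii)) the two inequalities are incompatible, so $Z_2(L)$ is abelian; and if $\dim Z(L)\ge 2$ (case (i)) then $Z(L)\subseteq Z(Z_2(L))=L^{n-5}$ forces $\dim Z(L)=2$ and $Z(L)=L^{n-5}$ (the value $3$ being already impossible), whence $L^{n-4}=[L,L^{n-5}]=[L,Z(L)]=0$, contradicting $L^{n-4}\ne 0$.

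The remaining and hardest case is (iii): $\dim Z(L)=1$, $\dim Z_2(L)=4$, and $n-3\le\dim L'\le n-2$, i.e.\ $\dim(L/L')\le 3$. Here $Z(L)=L^{n-4}\subseteq L^2$. I would pick $b,c\in Z_2(L)$ with $[b,c]\ne 0$; since $Z_2(L)\cap L'\subseteq Z(Z_2(L))$, neither $b$ nor $c$ can lie in $L'$ (an element of $Z_2(L)\cap L'$ is central in $Z_2(L)$, so would kill the bracket), and the same reasoning shows their images are linearly independent in $L/L'$, so the image $\overline{Z_2}$ of $Z_2(L)$ in $L/L'$ has dimension at least $2$. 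The bracket descends to a surjective alternating map $\wedge^2(L/L')\twoheadrightarrow L'/L^2$ that kills $\overline{Z_2}\wedge(L/L')$, because $[Z_2(L),L]\subseteq Z(L)\subseteq L^2$; therefore
\[
1\le \dim(L'/L^2)\le \binom{\dim(L/L')-\dim\overline{Z_2}}{2}\le\binom{3-2}{2}=0,
\]
which is absurd, so $Z_2(L)$ is abelian.

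The main obstacle is precisely case (iii): the Heisenberg/dimension bookkeeping that disposes of (i) and (ii) leaves $Z(Z_2(L))=L^{n-5}$ strictly larger than $Z(L)$, so no contradiction of the convenient form $L^{n-4}=[L,Z(L)]=0$ is available. The decisive input is the identity $[Z_2(L),L']=0$, which pins any non-abelian pair to directions transverse to $L'$ and thereby converts the hypothesis $\dim L'\ge n-3$ (few generators) into the vanishing of $L'/L^2$; verifying that the descended alternating map is well defined, surjective, and annihilates $\overline{Z_2}$ is the one place demanding care.
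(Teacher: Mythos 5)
Your proposal is correct, and it takes a genuinely different route from the paper's. The paper proceeds by cases on $\dim Z(L)\in\{1,2,3\}$ and leans on coclass bookkeeping of quotients: $L/Z(L)$ has coclass $1$ or $2$, which bounds $[Z_2(L):Z(L)]$, and in the situation of (iii) the coclass-$1$ quotient $L/Z_2(L)$ gives $\dim(L'+Z_2(L))=n-2$, whence $\dim(L'\cap Z_2(L))\ge 3$ and the containment $L'\cap Z_2(L)\subseteq Z(Z_2(L))$ leaves $Z(Z_2(L))$ of codimension at most $1$ in $Z_2(L)$. You instead prove a uniform rigidity statement --- if $Z_2(L)$ is non-abelian, the induced alternating pairing on $Z_2(L)/Z(Z_2(L))$ is nonzero, forcing $\dim Z_2(L)=4$ and $Z(Z_2(L))=L^{n-5}$ of dimension $2$ --- which disposes of (i) and (ii) at once (in (i) via $L^{n-4}=[L,Z(L)]=0$), and in (iii) you replace the paper's intersection count by the induced surjection $\wedge^2(L/L')\twoheadrightarrow L'/L^2$, which annihilates $\overline{Z_2}\wedge(L/L')$ and so forces $L'=L^2$, contradicting strict descent of the lower central series. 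Both arguments ultimately rest on the same two containments, $L^{n-5}\subseteq Z(Z_2(L))$ and $L'\cap Z_2(L)\subseteq Z(Z_2(L))$ (you derive the latter from the Jacobi identity, while the paper asserts it without proof in its case (3c)); what yours buys is a unified treatment of (i)--(ii) that avoids the quotient-coclass analysis, and in (iii) the stronger conclusion that a non-abelian $Z_2(L)$ with one-dimensional centre forces $\dim L'\le n-4$, essentially re-deriving the dichotomy of \propref{structure}. One terminological caution: the pairing on $Z_2(L)/Z(Z_2(L))$ is vector-valued (it lands in $Z(L)$, not in $\mathbb F$), so calling it a nondegenerate form is an abuse; but your argument only uses that a nonzero alternating bilinear map needs a domain of dimension at least $2$, which is sound.
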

\begin{proof}  Let $L$ be $n$-dimensional, $n \geq 6$, nilpotent Lie algebra of coclass $3$ over field $\mathbb F$. By \remref{rem_coclass3},  $ 1 \leq \dim Z(L) \leq 3$.

\textbf{Case (a): If $\dim Z(L) = 3$} then $L/ Z(L)$ is nilpotent Lie algebra of dimension $n-3$ of coclass $1$. Hence $\dim Z \left( L/Z(L) \right) = [Z_2(L): Z(L)]=1$, i.e., $Z_2(L) $ is abelian.

\textbf{Case (b): If $\dim Z(L) = 2$} then $L/ Z(L)$ is nilpotent Lie algebra of dimension $n-2$ of coclass $2$. Thus $ 1 \leq \dim Z \left( L/Z(L) \right) \leq 2$. But $Z_2(L)/Z(L)= Z \left( L/Z(L) \right) $, $[Z_2(L): Z(L)] \leq 2$. If $[Z_2(L): Z(L)]=1$ then $Z_2(L)$ is abelian. So, let $[Z_2(L): Z(L)]= 2$. But $L^{n-5} \subseteq Z(Z_2(L)) \subseteq Z_2(L)$ and $2 \leq \dim L^{n-5} \leq 4$. If $ \dim L^{n-5}=4$ then $Z_2(L)= L^{n-5} = Z(Z_2(L))$, i.e., $Z_2(L)$ is abelian. If $\dim L^{n-5}=3$ then, $[Z_2(L): L^{n-5}] \leq 1$ i.e., $Z_2(L)$ is abelian. If $\dim L^{n-5}=2$ then $[Z_2(L): L^{n-5}] \leq 2$. We can assume, $[Z_2(L): L^{n-5}] =2 $ as otherwise, $Z_2(L) $ is abelian. Now $[Z_2(L): Z(L)]= 2$, $[Z_2(L): L^{n-5}] =2$ and $\dim L^{n-5}=2, \dim Z(L)=2$. We can choose $x \in Z(L)$ such that $x \notin L^{n-5}$. Thus $K_1= \mathbb F x+ L^{n-5} \subseteq Z(Z_2(L))$ is abelian subalgebra of $Z_2(L)$ and $\dim K_1 =3.$ Hence $[Z_2(L): Z(Z_2(L))] \leq 1$, i.e., $Z_2(L)$ is abelian.

\textbf{Case (c): If $\dim Z(L) = 1$} then $[Z_2(L): Z(L)] \leq 3$ as $2 \leq \dim Z_2(L) \leq 4.$
\begin{itemize}
    \item[(1c)] If $[Z_2(L): Z(L)] =1$ then $Z_2(L)$ is abelian.
    \item[(2c)] If $[Z_2(L): Z(L)] =2$, i.e., $\dim Z_2(L)= 3$. But $L^{n-5} \subseteq Z(Z_2(L)) \subseteq Z_2(L)$ implies that $ 2 \leq \dim L^{n-5} \leq 3$. If $\dim L^{n-5} =3$ then $Z_2(L)= L^{n-5} = Z(Z_2(L))$, i.e., $Z_2(L)$ is abelian. If $\dim L^{n-5} =2$, then $[Z_2(L): L^{n-5}] = 1$ i.e., $Z_2(L)$ is abelian.
     \item[(3c)] If $[Z_2(L): Z(L)] =3$, i.e., $\dim Z_2(L)= 4$ then $K_2= L/ Z_2(L)$ is nilpotent Lie algebra of dimension $n-4$ of coclass $1$ implies that $\dim K_2' = n-6.$ Observe that $K_2'= \frac{L'+ Z_2(L)}{Z_2(L)}$ which implies that $\dim (L'+Z_2(L))= n-2$, i.e., $n-4 \leq \dim L' \leq n-2$. But $L' \cap Z_2(L) \subseteq Z(Z_2(L))$.  If $\dim L' = n-2$ then $ \dim L' \cap Z_2(L)= 4 $, i.e., $L' \cap Z_2(L)= Z(Z_2(L))= Z_2(L) $. Hence, $Z_2(L)$ is abelian. If $\dim L'= n-3$ then $\dim L' \cap Z_2(L)=3$, i.e, $[Z_2(L): L' \cap Z_2(L)]=1$ implies that $Z_2(L)$ is abelian.
\end{itemize}

    \noindent 
    Observe that part(i) follows from case(a) and case(b). Part(ii) follows from case(1c) and case(2c). Part(iii) follows from case(3c).
    \end{proof}

\medskip

The subsequent proposition outlines the structural characteristics of $Z_2(L)$ for finite-dimensional nilpotent Lie algebras with coclass $3$.

\begin{proposition}\label{structure}
    For a nilpotent Lie algebra $L$ with coclass $3$ over field $\mathbb F$ of characteristic different from $2$, if the dimension of $L$ is at least $6$, then the following statements hold: \begin{itemize}
\item[(i)] If the nilpotency class of the second center $Z_2(L)$ is not equal to $2$, then $Z_2(L)$ is abelian.
\item[(ii)] If the nilpotency class of $Z_2(L)$ is $2$, then either $Z_2(L)$ is abelian, or $\dim Z(L)=1, \dim Z_2(L)=4$ and $\dim L'= n-4$.
    \end{itemize}
\end{proposition}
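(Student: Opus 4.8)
The plan is to reduce the entire proposition to a single structural observation about $Z_2(L)$ combined with the contrapositive of \lemref{imp_coclass3}. The observation is that $Z_2(L)$ always has nilpotency class at most $2$, independently of all the hypotheses on dimensions. Indeed, by definition $[Z_2(L), L] \subseteq Z(L)$, so in particular $[Z_2(L), Z_2(L)] \subseteq Z(L)$; and since $Z(L)$ is central in $L$ we have $[Z_2(L), Z(L)] = 0$, whence $[Z_2(L), [Z_2(L), Z_2(L)]] \subseteq [Z_2(L), Z(L)] = 0$. Thus the lower central series of $Z_2(L)$ terminates after two steps. This settles part (i) immediately: if the nilpotency class of $Z_2(L)$ is not equal to $2$, then it is at most $1$, i.e.\ $Z_2(L)$ is abelian.

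For part (ii) I would argue by contraposition against \lemref{imp_coclass3}. Saying that the nilpotency class of $Z_2(L)$ equals $2$ is the same as saying that $Z_2(L)$ is non-abelian, so by \lemref{imp_coclass3} none of its three hypotheses (i)--(iii) can hold. Negating hypothesis (i) forces $\dim Z(L) = 1$. Feeding this into the negation of hypothesis (ii), and using the range $2 \leq \dim Z_2(L) \leq 4$ supplied by \remref{rem_coclass3}, rules out $2 \leq \dim Z_2(L) \leq 3$ and hence forces $\dim Z_2(L) = 4$.

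It then remains only to pin down $\dim L'$. With $\dim Z(L) = 1$ and $\dim Z_2(L) = 4$ now known, the negation of hypothesis (iii) of \lemref{imp_coclass3} says that $\dim L'$ lies outside the interval $[n-3, n-2]$. On the other hand, exactly these numerical data are treated in case (3c) of the proof of \lemref{imp_coclass3}, where it is shown that $n-4 \leq \dim L' \leq n-2$: the inclusion $L^{n-5} \subseteq L' \cap Z_2(L)$ together with $\dim L^{n-5} \geq 2$ gives the lower bound, while $K_2 = L/Z_2(L)$ of coclass $1$ gives $\dim(L' + Z_2(L)) = n-2$ and hence the upper bound. Combining $\dim L' \notin [n-3, n-2]$ with $n-4 \leq \dim L' \leq n-2$ leaves only $\dim L' = n-4$, which is precisely the remaining conclusion of part (ii).

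I do not expect a genuine obstacle here beyond organizing the case negations correctly; the only real content is the class-$\leq 2$ observation, after which the statement is a logical rearrangement of \lemref{imp_coclass3} glued to the inequality $n-4 \leq \dim L' \leq n-2$. The single point requiring care is to make sure this lower bound on $\dim L'$ is available under the standing hypotheses $\dim Z(L)=1$ and $\dim Z_2(L)=4$ alone, rather than only after assuming $Z_2(L)$ abelian; re-deriving it directly from $L^{n-5} \subseteq L' \cap Z_2(L)$ with $\dim L^{n-5} \geq 2$, as above, avoids any circularity with \lemref{imp_coclass3}.
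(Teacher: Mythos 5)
Your proof is correct, and part (i) takes a genuinely simpler and more general route than the paper. The paper only establishes that $Z_2(L)$ has nilpotency class at most $2$ under the standing hypotheses (coclass $3$, $n \geq 6$), via a case analysis on the index $[Z_2(L):L^{n-5}] \in \{0,1,2\}$: when the index is at most $1$ it concludes $Z_2(L)$ is abelian from $L^{n-5} \subseteq Z(Z_2(L))$, and when the index is $2$ it gets $\dim Z_2(L)=4$ and $\dim Z(Z_2(L)) \geq 2$, hence class at most $2$. Your observation that $[Z_2(L),Z_2(L)] \subseteq Z(L)$ and $[Z_2(L),Z(L)]=0$ force class $\leq 2$ holds in an arbitrary Lie algebra, dispenses with the inclusion $L^{n-5} \subseteq Z(Z_2(L))$ from \remref{rem_coclass3} entirely, and makes clear that part (i) has nothing to do with coclass $3$ or the dimension restriction. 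For part (ii), your argument and the paper's are the same logical rearrangement of \lemref{imp_coclass3}: the paper phrases it as a nested chain of dichotomies, you as a single contraposition, but the content is identical. Your extra care about the bound $n-4 \leq \dim L' \leq n-2$ is well placed: the paper invokes it silently in the step ``$\dim L' \neq n-4$, i.e., $n-3 \leq \dim L' \leq n-2$'', relying on its derivation inside case (3c) of the lemma, and even there the lower bound is asserted rather than fully derived. Your derivation via $L^{n-5} \subseteq L' \cap Z_2(L)$, $\dim L^{n-5} \geq 2$, and $\dim(L'+Z_2(L)) = n-2$ supplies the missing step, and since it uses only $\dim Z_2(L)=4$ (not any abelian-ness of $Z_2(L)$), there is indeed no circularity.
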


\begin{proof}  Let $L$ be $n$-dimensional, $n \geq 6$, nilpotent Lie algebra of coclass $3$ over field $\mathbb F$. By \remref{rem_coclass3}, $2 \leq \dim Z_2(L) \leq 4$ and $2 \leq \dim L^{n-5} \leq 4$, i.e., $[Z_2(L): L^{n-5}] \leq 2.$

    \begin{itemize}
        \item[case(a):] If $Z_2(L)= L^{n-5}$ then $Z_2(L)$ is abelian as $L^{n-5} \subseteq Z(Z_2(L)) \subseteq Z_2(L)$.
        \item[case(b):] If $[Z_2(L): L^{n-5}]=1$ then $Z_2(L)$ is abelian as $L^{n-5} \subseteq Z(Z_2(L))$.
         \item[case(c):] If $[Z_2(L): L^{n-5}]=2$ then $\dim Z_2(L)=4$ and $\dim L^{n-5}=2$. But $L^{n-5} \subseteq Z(Z_2(L))$, i.e., $\dim Z(Z_2(L)) \geq 2$, implies that nilpotency class of $Z_2(L)$ can be atmost $2$. 
    \end{itemize}
    \begin{itemize}
        \item[(i)] If nilpotency class of $Z_2(L)$ is not $2$ that means $Z_2(L)$ is $1$-step nilpotent. Hence, $Z_2(L)$ is abelian.
        \item[(ii)] If nilpotency class of $Z_2(L)$ is $2$ then either $\dim Z(L) \neq 1$ or $\dim Z(L) =1$. In the first case, $Z_2(L)$ is abelian by \lemref{imp_coclass3}. In the latter case, i.e., $\dim Z(L)=1$, either $\dim Z_2(L) \neq 4$ or $\dim Z_2(L) =4$. If $\dim Z(L)=1$, and $\dim Z_2(L) \neq 4$ then $Z_2(L)$ is abelian by \lemref{imp_coclass3}. Otherwise $\dim Z(L)=1$, and $\dim Z_2(L) = 4$. Either $\dim L' \neq n-4$ or $\dim L'=n-4.$ If $\dim Z(L)=1$,  $\dim Z_2(L) = 4$, and $\dim L' \neq n-4$, i.e., $n-3 \leq \dim L' \leq n-2$ then $Z_2(L)$ is abelian by \lemref{imp_coclass3}.  Otherwise $\dim Z(L)=1$,  $\dim Z_2(L) = 4$, and $\dim L' = n-4.$
         \end{itemize}
\end{proof}

\begin{corollary}
   Let $L$ be $n$-dimensional, $ n \geq 6$, nilpotent Lie algebra with coclass $3$ over field $\mathbb F$ of characteristic not equal $2$. If either nilpotency class of $Z_2(L)$ is not equal to $2$ or $\dim Z(L) \neq 1,$ or $ \dim Z_2(L)\neq 4,$ or $\dim L'\neq n-4$ then $\mathcal{A}(L)$ forms a subgroup of $Aut(L)$. 
\end{corollary}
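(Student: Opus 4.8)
The plan is to reduce the entire statement to the single criterion supplied by \lemref{abelian}: once we know that $Z_2(L)$ is abelian and that $char\,\mathbb F\neq 2$, that lemma gives at once that $\mathcal{A}(L)$ is a subgroup of $Aut(L)$. So the whole task becomes verifying that the listed hypotheses force $Z_2(L)$ to be abelian, and for this I would appeal directly to \propref{structure}, which has already classified exactly when $Z_2(L)$ can fail to be abelian for $n\geq 6$.

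First I would dispose of the case in which the nilpotency class of $Z_2(L)$ is not equal to $2$. Here \propref{structure}(i) yields immediately that $Z_2(L)$ is abelian, and \lemref{abelian} closes the argument. Next I would treat the complementary case, where the nilpotency class of $Z_2(L)$ equals $2$. In that situation \propref{structure}(ii) asserts that $Z_2(L)$ is abelian \emph{unless} the three equalities $\dim Z(L)=1$, $\dim Z_2(L)=4$, and $\dim L'=n-4$ hold simultaneously. The key observation is that the hypothesis of the corollary is tailored precisely to exclude this one exceptional configuration: when the class of $Z_2(L)$ is $2$, the stated disjunction collapses to ``$\dim Z(L)\neq 1$ or $\dim Z_2(L)\neq 4$ or $\dim L'\neq n-4$'', i.e.\ at least one of those three equalities must fail. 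Hence we are never in the exceptional case of \propref{structure}(ii), and $Z_2(L)$ is again abelian.

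Having established in every case that $Z_2(L)$ is abelian, I would invoke \lemref{abelian} (using $char\,\mathbb F\neq 2$) to conclude that $\mathcal{A}(L)$ forms a subgroup of $Aut(L)$. I do not expect any genuine obstacle: the mathematical substance is already carried by \propref{structure} and \lemref{abelian}, so the corollary is essentially a matter of matching the hypotheses against the unique non-abelian configuration isolated in the proposition. The only step demanding any care is the logical parsing of the four-fold disjunction in the hypothesis, to confirm that it rules out exactly the triple $(\dim Z(L),\dim Z_2(L),\dim L')=(1,4,n-4)$ under class $2$, and nothing more.
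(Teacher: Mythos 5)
Your proof is correct and takes essentially the same route as the paper: both arguments reduce the corollary to showing that $Z_2(L)$ is abelian under each of the listed hypotheses and then conclude via \lemref{abelian}. The only cosmetic difference is that the paper cites \lemref{imp_coclass3} alongside \propref{structure}, whereas you route everything through \propref{structure} alone, which suffices since part (ii) of that proposition already packages the content of the lemma.
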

\begin{proof}
By \propref{structure} and \lemref{imp_coclass3}, in each case, $Z_2(L)$ is abelian. By using \lemref{abelian}, $\mathcal{A}(L)$ forms a subgroup of $Aut(L)$.
\end{proof}

\bibliography{ref}
\bibliographystyle{alpha}


\end{document}